\documentclass{article}

\usepackage{microtype}
\usepackage{graphicx}
\usepackage{subcaption}
\usepackage{booktabs} 
\usepackage{siunitx} 

\usepackage{hyperref}

\usepackage[preprint]{icml2026}

\usepackage{amsmath}
\usepackage{amssymb}
\usepackage{mathtools}
\usepackage{amsthm}

\DeclareMathOperator*{\argmax}{arg\,max}

\newcommand{\dJFB}{d_{x}^{JFB}}
\newcommand{\dJFBb}{d_{x_b}^{JFB}}
\newcommand{\dJFBxi}{d_{\xi_j}^{JFB}}

\newcommand{\R}{\mathbb{R}}

\newcommand{\Jcal}{\mathcal{J}}
\newcommand{\T}{^{\top}}
\newcommand{\invT}{^{-\top}}
\newcommand{\norm}[1]{\left\lVert#1\right\rVert}
\newcommand{\inner}[2]{\left\langle #1, #2 \right\rangle}

\usepackage[capitalize,noabbrev]{cleveref}

\theoremstyle{plain}
\newtheorem{theorem}{Theorem}[section]

\newtheorem{lemma}[theorem]{Lemma}
\newtheorem{corollary}[theorem]{Corollary}
\newtheorem{definition}[theorem]{Definition}
\newtheorem{assumption}[theorem]{Assumption}
\newtheorem{remark}[theorem]{Remark}

\usepackage[textsize=tiny]{todonotes}

\icmltitlerunning{On the Convergence of JFB for Implicit Optimal Control}

\begin{document}

\twocolumn[
  \icmltitle{On the Convergence of Jacobian-Free Backpropagation for  Optimal Control Problems with Implicit Hamiltonians}

  \begin{icmlauthorlist}
    \icmlauthor{Eric Gelphman}{CSM}
    \icmlauthor{Deepanshu Verma}{CLEM}
    \icmlauthor{Nicole Tianjiao Yang}{UTK}
    \icmlauthor{Stanley Osher}{UCLA}
    \icmlauthor{Samy Wu Fung}{CSM}
  \end{icmlauthorlist}

  \icmlaffiliation{CSM}{Department of Applied Mathematics and Statistics, Colorado School of Mines}
  \icmlaffiliation{CLEM}{Department of Mathematical Sciences, Clemson University}
  \icmlaffiliation{UTK}{Department of Mathematics, University of Tennessee Knoxville}
  \icmlaffiliation{UCLA}{Department of Mathematics, University of California Los Angeles}

  \icmlcorrespondingauthor{Samy Wu Fung}{swufung@mines.edu}

  \vskip 0.3in
]

\printAffiliationsAndNotice{}  

\begin{abstract}
Optimal feedback control with implicit Hamiltonians poses a fundamental challenge for learning-based value function methods due to the absence of closed-form optimal control laws. 
Recent work~\cite{gelphman2025end} introduced an implicit deep learning approach using Jacobian-Free Backpropagation (JFB) to address this setting, but only established sample-wise descent guarantees. In this paper, we establish convergence guarantees for JFB in the stochastic minibatch setting, showing that the resulting updates converge to stationary points of the expected optimal control objective. We further demonstrate scalability on substantially higher-dimensional problems, including multi-agent optimal consumption and swarm-based quadrotor and bicycle control. Together, our results provide both theoretical justification and empirical evidence for using JFB in high-dimensional optimal control with implicit Hamiltonians.
\end{abstract}

\section{Introduction}
\label{sec:intro}
    We aim to generate semi-global feedback controllers for high-dimensional control problems of the form 
    \begin{equation}
        \begin{split}
        \min_{u \in U} \int_0^T L(s, z_{x}, u) ds + G(z_{x}(T)), \\ \text{subject to} \quad \dot{z}_{x} = f(t, z_{x},u), \;\; z_{x}(0) = x,
        \end{split}
        \label{eq:originalOC}
    \end{equation}
    where $z_x\in \mathbb{R}^n$   is the state trajectory with dynamics $f$ and initial condition $x$, $u(t)\in U\subset \R^m$ is the control input, $L$ is the running cost, and $G$ is the terminal cost. The subscript in  $z_{x}$ denotes the dependence of the state on the initial condition $x$. 
    
    The Pontryagin Maximum Principle (PMP)~\cite{pontryagin2018mathematical, kopp1962pontryagin} provides a first-order characterization of optimal solutions to continuous-time control problems by introducing an adjoint (costate) variable that couples the system dynamics and the objective. In particular, PMP associates to the control problem a generalized Hamiltonian of the form
    \begin{equation}
    \mathcal{H}(t,z_x,p_x,u)
    =
    - \langle p_x, f(t,z_x,u)\rangle - L(t,z_x,u),
    \end{equation}
    where the adjoint variable $p_x$ evolves backward in time and encodes the sensitivity of the optimal cost with respect to the state. An optimal control $u^\star$ must satisfy a two-point boundary value problem consisting of the state dynamics and adjoint dynamics,
    \begin{align}
    \dot z_x(t) &= -\nabla_p \mathcal{H}(t,z_x(t),p_x(t),u^\star(t)),\\
    \dot p_x(t) &= \nabla_z \mathcal{H}(t,z_x(t),p_x(t),u^\star(t)),
    \end{align}
    with $z_x(0)=x$ and $p_x(T)=\nabla G(z_x(T))$,
    along with the optimality condition 
    \begin{equation}
    \begin{split}
    u^\star(t)\in \arg\max_u \mathcal{H}(t,z_x(t),p_x(t),u)
    \\
    \iff
    \nabla_u \mathcal{H}(t,z_x,p_x,u^\star)=0.
    \end{split}
    \label{eq:u_star_def_general}
    \end{equation}
    A classical and particularly effective way to exploit this structure is through its connection to dynamic programming. When the value function is sufficiently regular, the adjoint variable coincides with the gradient of the value function along optimal trajectories, thereby linking the PMP system to the Hamilton--Jacobi--Bellman (HJB) equation. 
    This observation allows for a class of learning-based methods that parameterize the value function and recover feedback controls via Hamiltonian maximization. These types of approaches have demonstrated strong performance in high-dimensional optimal control problems~\cite{onken2022neural, onken2021neural, lin2021alternating}, particularly when the Hamiltonian
    \begin{equation}
    H(t,z_x,p_x)=\sup_u \mathcal{H}(t,z_x,p_x,u)
    \label{eq:straight_H}
    \end{equation}
    and the corresponding optimal controller $u^\star$ admit closed-form expressions~\cite{onken2022neural, onken2021neural, nakamura2021adaptive, zhao2024offline, Verma-HJB-RL}. By embedding the structural relationships implied by PMP directly into the learning process, value function parameterization can offer substantial efficiency gains over methods that directly parameterize the control policy~\cite{onken2022neural, onken2021neural, li2024neural}. However, these advantages \emph{critically depend} on the availability of an explicit Hamiltonian maximizer, which is often not the case in practical control problems.

    \subsection{Our Contribution}

    Recent work by~\cite{gelphman2025end} proposed an end-to-end framework for learning semi-global feedback controllers for~\eqref{eq:originalOC} by directly parameterizing the value function. Their approach embeds the value function within an implicit neural network~\cite{el2021implicit} and leverages Jacobian-Free Backpropagation (JFB)~\cite{fung2022jfb} to enable efficient end-to-end training despite the presence of implicit Hamiltonians. The authors show promising empirical results for control problems of 
moderate dimensions. Their theoretical analysis establishes that JFB 
produces descent directions for \emph{individual trajectories}.
However, this \emph{sample-wise} guarantee does not address the 
\emph{stochastic} optimization setting. Critically, 
sample-wise descent does \emph{not} imply convergence.

    In this work, we extend that framework both theoretically and empirically. Our contributions are summarized as follows:
    \begin{itemize}
        \item \textbf{First convergence analysis for biased SGD 
    in optimal control:} We prove that JFB-based stochastic gradient descent 
    converges to stationary points when training over a distribution of 
    trajectories, despite JFB producing \emph{systematically biased} 
    gradient estimates. This fundamentally differs from~\cite{gelphman2025end}, 
    which only established that individual sample updates move in descent directions. 
        \item \textbf{Theoretical verification in practice:}
    We empirically verify that the assumptions required by our convergence analysis are satisfied in practical training regimes.
        \item \textbf{Scalability to 100-agent problems:} We demonstrate 
    effectiveness on optimal control problems an order of magnitude larger 
    than~\cite{gelphman2025end}, including 100-agent consumption-savings, 
    swarm quadrotor control, and multi-bicycle dynamics where standard 
    implicit differentiation fails due to memory constraints 
    (Section~\ref{sec:experiments}).
    \end{itemize}

    \section{Related Works}
    \label{sec:related_works}
    Recent advances in neural network–based methods have enabled efficient solutions to high-dimensional optimal control problems in settings where the Hamiltonian admits closed-form solutions. Approaches such as Neural-PMP~\cite{gu2022pontryagin}, PMP-Net, and Pontryagin Differentiable Programming~\cite{jin2020pontryagin, jin2021safe} leverage the Pontryagin Maximum Principle (PMP) to construct end-to-end differentiable frameworks for learning optimal controllers. Closely related are value function parameterization methods~\cite{ruthotto2020machine, li2024neural, onken2022neural, onken2021ot, onken2021neural, Verma-HJB-RL, lin2021alternating, vidal2023taming}, which parameterize the value function directly and recover optimal controls via PMP relations. However, these approaches fundamentally rely on the ability to analytically solve the Hamiltonian maximization problem, which restricts their applicability when closed-form solutions are unavailable. 

    To address computational challenges in learning-based control, several differentiable optimization approaches have been proposed. DiffMPC~\cite{amos2018differentiable} differentiates through Model Predictive Control via KKT conditions, while IDOC~\cite{xu2023revisiting} achieves linear-time complexity through direct matrix equation evaluation. Learned MPC methods~\cite{hertneck2018learning} further reduce computational cost using neural approximations, but often at the expense of preserving optimal control structure.

    Most closely related to our work is the recent framework of~\cite{gelphman2025end}, which introduced an implicit value function parameterization approach for optimal control problems with implicit Hamiltonians using implicit neural networks (INNs) and Jacobian-Free Backpropagation (JFB). While that work demonstrated promising empirical behavior and established sample-wise descent properties, it did not address stochastic minibatch training or convergence. This work extends this framework by developing a convergence theory for minibatch JFB and by providing a substantially broader empirical evaluation on high-dimensional optimal control problems.

    \section{Background}
    \label{sec:background}
    \subsection{Optimal Control}
    \label{sec:OC_back}
    
    A standard route to constructing optimal feedback controllers is through the system’s value function, denoted by $\phi(t,z)$. A key result in optimal control theory states that the value function fully characterizes the optimal control policy. In particular, along an optimal trajectory, the adjoint (costate) variable appearing in the Pontryagin Maximum Principle (PMP) coincides with the spatial gradient of the value function. More precisely, Theorem~I.6.2 of~\cite{fleming2006controlled} states
    \begin{equation}
    \label{eq:gradPhi}
    p_x(t) = \nabla_z \phi\big(t, z_x^\star(t)\big).
    \end{equation}
    
    This relationship allows the optimal control defined implicitly by the PMP optimality condition~\eqref{eq:u_star_def_general} to be written explicitly as a feedback law in terms of the value function,
    \begin{equation}
    \label{eq:opt_control_nec}
    u^\star(t)
    =
    u^\star\!\left(
    t,\,
    z_x^\star(t),\,
    \nabla_z \phi\big(t, z_x^\star(t)\big)
    \right).
    \end{equation}
    As a result, learning or approximating the value function immediately yields access to the associated optimal feedback controller.
    
    The key difficulty addressed in this work arises at this stage. Evaluating the feedback law in~\eqref{eq:opt_control_nec} requires solving the Hamiltonian maximization problem in~\eqref{eq:u_star_def_general}. When this maximization admits a closed-form solution, the resulting feedback law can be computed efficiently. However, in many problems of practical interest, no such closed-form expression exists~\cite{betts2010practical, gelphman2025end}. In these cases, computing $u^\star$ becomes computationally challenging, and existing value function–based approaches~\cite{onken2022neural, onken2021neural, meng2025recent, ruthotto2020machine, lin2021alternating} become prohibitively expensive when the control dimension is large.
    
    The value function $\phi$ itself satisfies the Hamilton--Jacobi--Bellman (HJB) partial differential equation~\cite{evans1983introduction},
    \begin{equation}
    \label{eq:HJB}
    -\partial_t \phi(t,z)
    +
    H\!\left(t, z, \nabla_z \phi(t,z)\right)
    =
    0,
    \quad
    \phi(T,z) = G(z),
    \end{equation}
    where the Hamiltonian $H$ is obtained by solving the maximization problem in~\eqref{eq:straight_H}. This PDE formulation provides a complementary, dynamic-programming perspective on optimal control and further highlights the central role of the Hamiltonian maximization in both analysis and computation.

    \subsection{Implicit Deep Learning}
    \label{subsec: implicit_deep_learning}
    
    To address the presence of implicit Hamiltonians in our control framework, we employ an Implicit Neural Network (INN) architecture. INNs define their outputs as fixed points of learnable operators~\cite{el2021implicit}. In our setting, the operator $T_\theta$ is constructed directly from the Hamiltonian optimality condition~\eqref{eq:u_star_def_general}, and its fixed point corresponds to the optimal control. Specifically, the network output $u_\theta^\star$ is defined implicitly through
    \begin{equation}
    u_\theta^\star = T_\theta(u_\theta^\star; t, z),
    \label{eq:fixed_point}
    \end{equation}
    where $\theta \in \mathbb{R}^p$ denotes the network parameters and $(t,z)$ are the input variables.
    
    Unlike standard feedforward architectures, INNs are characterized by implicit fixed-point conditions~\cite{el2021implicit, fung2024generalization}. This modeling paradigm has been successfully applied across a wide range of applications, including optical flow estimation~\cite{bai2022deep}, game-theoretic equilibria~\cite{mckenzie2024three}, maze-solving~\cite{knutson2024logical}, decision-focused learning~\cite{mckenzie2024differentiating}, 
    image classification~\cite{bai2020multiscale}, and inverse problems~\cite{gilton2021deep, Yin2022Learning, liu2022online, heaton2021feasibility, heaton2023explainable}. Since the optimal control $u^\star$ in~\eqref{eq:u_star_def_general} is itself defined implicitly, INNs provide a natural modeling choice; that is, the fixed-point condition directly encodes the notion of optimality.
    
    Training INNs typically requires differentiating through the solution of a fixed-point equation. A standard approach is implicit differentiation~\cite{he2016deep, el2021implicit}, obtained by differentiating both sides of~\eqref{eq:fixed_point},
    \begin{equation*}
    \frac{d u_\theta^\star}{d\theta}(t,z)
    =
    \frac{\partial T_\theta(u_\theta^\star; t,z)}{\partial u}
    \frac{d u_\theta^\star}{d\theta}(t,z)
    +
    \frac{\partial T_\theta(u_\theta^\star; t,z)}{\partial \theta}.
    \end{equation*}
    Rearranging yields
    \begin{equation}
    \frac{d u_\theta^\star}{d\theta}(t,z)
    =
    \mathcal{J}_\theta^{-1}
    \frac{\partial T_\theta(u_\theta^\star; t,z)}{\partial \theta},
    \quad
    \mathcal{J}_\theta
    =
    I -
    \frac{\partial T_\theta(u_\theta^\star; t,z)}{\partial u},
    \label{eq:implicit_gradient}
    \end{equation}
    which requires solving a linear system \emph{for each evaluation of $(t,z)$}.
    
    In the context of feedback control, this cost becomes prohibitive. \emph{The linear system must be solved for every sample, at every time step (of a trajectory), and across all training iterations}. As a result, recent work has focused on reducing the computational burden of training implicit models~\cite{bai2019deep, el2021implicit, fung2022jfb, bolte2024one}. Among these methods, Jacobian-Free Backpropagation (JFB)~\cite{fung2022jfb} offers a particularly simple alternative, closely related to one-step differentiation~\cite{bolte2024one}. JFB replaces the Jacobian inverse $\mathcal{J}_\theta^{-1}$ with the identity, corresponding to a zeroth-order Neumann approximation,
    \begin{equation}
    \frac{d u_\theta^\star}{d\theta}(t,z)
    \approx
    \frac{\partial T_\theta(u_\theta^\star; t,z)}{\partial \theta}.
    \label{eq:jfb_gradient}
    \end{equation}
    
    Despite its simplicity, JFB has been shown to be effective in practice and is straightforward to implement~\cite{fung2022jfb}. An additional advantage of JFB is that it avoids difficulties associated with non-differentiable components in the operator $T_\theta$, such as ReLU activations, making it particularly well suited for the control problems considered in this work.

    \subsection{Training Problem Formulation}
    We formulate training as an optimization over a distribution of initial conditions,
    \begin{subequations}
    \begin{align}
    \min_\theta \; \mathbb{E}_{x \sim \rho} \; J_x(\theta)
    &=
    \int_0^T L(s, z_x, u_\theta^\star)\, ds + G(z_x(T)), \label{eq:training_problem1}\\
    \text{subject to:}\quad
    \dot z_x &= f(t,z_x,u_\theta^\star), \quad z_x(0)=x, \label{eq:training_problem2}\\
    u_\theta^\star &\in \argmax_u \mathcal{H}(t,z_x,\nabla \phi_\theta,u), \label{eq:training_problem3}
    \end{align}
    \end{subequations}
    where $\rho \in \mathbb{P}$ denotes a distribution over initial states defined on the probability space $\langle \Omega, \Sigma, P\rangle$, with $P$ being the probability measure on the space. Solving~\eqref{eq:training_problem1}--\eqref{eq:training_problem3} yields a semi-global value function and, consequently, a feedback controller, since training is performed over a family of initial conditions.
    
    The main computational challenge lies in evaluating and differentiating through the implicitly defined control $u_\theta^\star$. As discussed in Section~\ref{subsec: implicit_deep_learning}, $u_\theta^\star$ is represented as the fixed point of an operator $T_\theta$ derived from the Hamiltonian optimality condition. For example, $T_\theta$ may be instantiated by an algorithmic update whose fixed points satisfy the first-order optimality condition of~\eqref{eq:training_problem3}, such as a gradient-based iteration, a proximal or operator-splitting scheme, or another monotone operator, while remaining differentiable with respect to the parameters $\theta$. However, this implicit representation introduces nontrivial computational costs during training, since differentiation must account for the dependence of the fixed point on $\theta$.
    
    \section{Convergence Analysis}
    \label{sec:convergence}
    In this section, $z$ is interchangeable with $z_x$.
    \subsection{Notation and Essential Assumptions}
    \label{subsec:essential_assumptions}
    \begin{assumption}[Smoothness and Contractivity] The following holds:
        \begin{enumerate}
            \item There exists $\gamma \in  (0,1)$ such that the operator $T_\theta$ is $\gamma$-contractive in $u$ for all $t \in [0,T], z \in \mathbb{R}^n$ and $ \theta \in \mathbb{R}^p$.
            \item $T_{\theta}$ is $C^1$ with respect to $\theta, t, u, z$. Furthermore, the gradient with respect to $\theta$ of the functional
            \begin{equation}
            \label{eq:J}
            J_x[u_{\theta}] = \int_{0}^{T}L(t,u_{\theta},z)dt + G(z(T)) 
            \end{equation}
            is $L_J$-Lipschitz and the functional ~\eqref{eq:J} is bounded from below by a constant $J_{\text{inf}}$ in some open subset of its domain.
        \end{enumerate}  
        \label{assumption:T}
    \end{assumption}
    \begin{definition}[Objective and Gradient and JFB]
        The (true) gradient and JFB gradient approximation for a sample from the objective~\eqref{eq:training_problem1}--\eqref{eq:training_problem3} are defined as
        \begin{equation}
        \nabla_{\theta}J_x = \int_{0}^{T}v_{\theta,x}(t) dt \quad \text{ and } \dJFB = \int_{0}^{T}w_{\theta,x}(t) dt,
        \label{eq:full_derivatives}
        \end{equation}
        respectively, where 
        \begin{equation}
            \begin{split}
            v_{\theta,x}(t) = \frac{d u_\theta^\star}{d \theta}^{\top}h_{\theta, x}, \quad \text{ and } \quad 
            w_{\theta,x}(t) = \frac{\partial T_\theta}{\partial  \theta}^{\top}h_{\theta, x},
            \end{split}
            \label{eq:integrand_definitions}
        \end{equation} 
        and $h_{\theta, x} = (\nabla_u L(t,z_{x}, u_\theta^\star) + \nabla_u f^{\top}p_{x})$ represents the gradient of the Hamiltonian, $p_x$ is the adjoint variable satisfying the adjoint equation~\cite{evans1983introduction}, and
        $\dfrac{du_\theta^\star}{d\theta}$ is the implicit gradient given by~\eqref{eq:implicit_gradient}.
    \end{definition}
    Note that JFB \emph{circumvents the expensive computation of $\frac{du_\theta^\star}{d\theta}$}, and instead replaces it with $\frac{\partial T_\theta}{\partial \theta}$, resulting in significantly reduced computational cost.
  
    For the remainder of this work, $\sigma_{min}(\cdot)$ and $\sigma_{max}(\cdot)$ are functions that, respectively, extract the smallest and largest singular values of its inputs, assumed to be a matrix. 

    \begin{assumption}[Bound on Hamiltonian Gradient]
        There exists $B_{max} > 0$ such that for all $t,z,\theta$, we have 
        \begin{equation}
            \left\| h_{\theta, x}(t,z,u_\theta^\star) \right\| = \|\nabla_uL + \nabla_uf^{\top}p_x\| \leq B_{\max},
        \end{equation}
        where $p_x$ is the adjoint variable satisfying the adjoint equations~\cite{evans1983introduction}.
        \label{assumption:h}
    \end{assumption}
    Assumption~\ref{assumption:h} requires the gradient of the Hamiltonian (with respect to the control) to be uniformly bounded above by $B_{\text{max}}$. 
    \begin{assumption}[Conditioning on JFB Integrand Matrix]
        \label{assumption:M}
        For any $\theta, t, z$, the matrix $M_\theta = \frac{\partial T_{\theta}}{\partial \theta}(u_\theta^\star; t, z) \in \mathbb{R}^{m \times p}$ has full row rank and satisfies the upper bound on the singular values
        \begin{equation}
        \sigma_{\max}(M_\theta) \leq \frac{1}{\sqrt{\beta}},
        \label{eq:bounded_singular_value_M}
        \end{equation}
        where $\beta > 0$. Moreover, the Gram matrix $M_{\theta}M_{\theta}^{\top}$ is nonsingular and $\left( M_\theta M_\theta^\top \right)^{-1}$ satisfies the upper bound on the condition number
        \begin{equation}
        \kappa((M_{\theta}M_{\theta})^{-1}) = \frac{\lambda_{max}((M_{\theta}M_{\theta})^{-1})}{\lambda_{min}((M_{\theta}M_{\theta})^{-1})} < \frac{1}{\gamma}.
        \end{equation}
    \end{assumption}
    Assumption~\ref{assumption:M} is based on the original paper on JFB~\cite{fung2022jfb}, which requires conditioning restrictions on the JFB update $M_\theta$. For brevity, we denote $Var_x[v_{\theta,x}(t)] := \mathbb{E}_x[\|v_{\theta,x}-E_v\|^2]$ and similarly for $Var_x[w_{\theta,x}(t)]:= \mathbb{E}_x[\|w_{\theta,x}-E_w\|^2]$, where $E_v = \mathbb{E}_x[v_{\theta,x}]$ and $E_w = \mathbb{E}_x[w_{\theta,x}]$.
    \begin{assumption}[Variance Bound]
        \label{assumption:expectation_integrand_inner_product}
        $\forall \theta, t, z$, $\exists 0 < \delta_{var} < \lambda_{-} - \gamma \lambda_{+}$ such that 
        \begin{equation}
        \max\left(\sqrt{\text{Var}_x[v_{\theta,x}(t)]}, \sqrt{\text{Var}_x[w_{\theta,x}(t)]}\right)^2 \leq \delta_{var}\|\mathbb{E}_x[M_{\theta}v_{\theta,x}]\|^2,  
        \label{eq:variance_vw}
        \end{equation}
        where $\lambda_{+}$ and $\lambda_{-}$ are \emph{uniform bounds} on the largest and smallest eigenvalues of $(M_\theta M_\theta^\top)^{-1}$ over $\theta, t,z,$ respectively.
    \end{assumption}
        Assumption~\ref{assumption:expectation_integrand_inner_product} is possibly the most important assumption needed to prove convergence in expectation (and probability) to a critical point of~\eqref{eq:training_problem1}--\eqref{eq:training_problem3}. In words, this assumption controls how noisy the sample-wise integrands $v_{\theta,x}$ and $w_{\theta,x}$ are relative to their mean direction. This assumption is similar to variance bounds used in standard proofs of stochastic gradient descent (SGD)~\cite{bottou2018optimization}. 

\begin{assumption}
    Let 
    \begin{equation}
        C_v = \frac{1}{T} \int_0^T v_{\theta,x}(t) dt \quad \text{ and } \quad C_w = \frac{1}{T} \int_0^T w_{\theta,x}(t) dt, 
    \end{equation}
    where $v_{\theta,x}, w_{\theta,x}$ are defined in~\eqref{eq:integrand_definitions}. 
    We have that for all $\theta, x, z$, the following hold.
    \begin{enumerate}
        \item Each element in the vectors $v_{\theta,x}, w_{\theta,x}$ is integrable on $[0,T]$ with respect to $t$. Moreover, $v_{\theta,x},w_{\theta,x}$ are integrable on $[0,T] \times \Omega$, where $\Omega$ is the sample space of the distribution of initial conditions, $\rho$.
        \item $\exists  \ \delta_v, \delta_w, a_v, a_w \geq 0 \text{ and }\epsilon_v > 0$ such that
        \begin{align*}
        &\|\mathbb{E}_x[v_{\theta,x}(t) - C_v]\| \leq a_v + \delta_v \left\|\mathbb{E}_x[ \nabla_{\theta}J_x]\right\|, \\  
        &\|\mathbb{E}_x[w_{\theta,x}(t) - C_w]\| \leq a_w + \delta_w \left\| \mathbb{E}_x[\dJFB]\right\|
        \end{align*}
        and
        \begin{align*}
        &\max(a_v + \delta_v\left\| \mathbb{E}_x[\nabla_{\theta}J_x]\right\| , a_w + \delta_w \left\| \mathbb{E}_x[\dJFB]\right\|)^2 \\
        & \leq \delta_{\theta}^{2} - \frac{\epsilon_v}{T^2} \left\| \mathbb{E}_x[\nabla_{\theta}J_x]\right\|^2, 
        \end{align*}
        where $\delta_{\theta} = \sqrt{\lambda_{-} - \gamma\lambda_{+} - \delta_{var}}\|\mathbb{E}_x[M_{\theta}v_{\theta, x}(t)]\|$
    \end{enumerate}
\label{assumption:expectation_timeaverage}
\end{assumption}
Assumption~\ref{assumption:expectation_timeaverage} controls the magnitude of temporal deviations of the integrands from their time averages, which ensures that these deviations remain uniformly bounded. 
This condition captures a phenomenon specific to optimal control problems (the accumulation of gradient information over time) and does not arise in standard analysis of SGD. \newline \newline

\begin{remark}
    These assumptions are related to those in~\cite{gelphman2025end}, but differ in several fundamental ways. In particular, the assumptions in~\cite{gelphman2025end} are imposed \emph{sample-wise}, which are significantly stronger than those considered here, where all conditions are formulated in expectation with respect to $x$. Moreover,~\cite{gelphman2025end} assumes the existence of uniform lower bounds on $\|h_{\theta,x}\|$ and $\sigma_{\min}(M_{\theta})$, the smallest singular value of $M_{\theta}$. This distinction is crucial as such lower bounds generally fail to hold at stationary points, thereby precluding convergence guarantees under those assumptions. For this reason, we do not impose lower bounds on $\|h_{\theta,x}\|$ or $\sigma_{\min}(M_{\theta})$ in our analysis.
\end{remark}

\subsection{Alignment of JFB and Gradient Lemmas}
\newcommand{\lemmaExpectationIntegrand}[1]{Under Assumptions \ref{assumption:T}-\ref{assumption:expectation_integrand_inner_product}, 
\begin{equation*}
\langle \mathbb{E}_x[v_{\theta,x}(t)],\mathbb{E}_{x}[w_{\theta,x}(t)]\rangle \geq \delta_{\theta}^{2} \geq 0, \quad \forall \theta, t,z.
\end{equation*}
}
\begin{lemma}
    \label{lemma:expectation_integrand}
    \lemmaExpectationIntegrand{main}
\end{lemma}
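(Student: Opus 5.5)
The plan is to first prove a \emph{sample-wise} lower bound on $\langle v_{\theta,x}(t), w_{\theta,x}(t)\rangle$ in terms of $\|M_\theta v_{\theta,x}(t)\|^2$, and then pass to expectations. The passage to expectations costs a covariance term, which Assumption~\ref{assumption:expectation_integrand_inner_product} is designed to absorb. Throughout, fix $\theta, t$ and write $M=M_\theta$, $A=(MM^\top)^{-1}$ and $R=\partial T_\theta/\partial u$, so that $\mathcal{J}_\theta = I-R$. By Assumption~\ref{assumption:T}, $\|R\|\le\gamma$.

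\emph{Step 1 (sample-wise identity).} By \eqref{eq:implicit_gradient}, $v_{\theta,x} = M^\top \mathcal{J}_\theta\invT h_{\theta,x}$ and $w_{\theta,x}=M^\top h_{\theta,x}$. Multiplying $v$ by $M$ gives $Mv = MM^\top \mathcal{J}_\theta\invT h$. Since $M$ has full row rank (Assumption~\ref{assumption:M}), $MM^\top$ is invertible, so $\mathcal{J}_\theta\invT h = A Mv$ and hence $h=\mathcal{J}_\theta^\top A M v$. Substituting this into $w$ yields $w = M^\top \mathcal{J}_\theta^\top A M v$. Setting $y=Mv$, we obtain
\begin{equation*}
\langle v,w\rangle = y^\top \mathcal{J}_\theta^\top A y = y^\top A y - y^\top R^\top A y \ge \lambda_-\|y\|^2 - \gamma\lambda_+\|y\|^2 .
\end{equation*}
Here $\lambda_\pm$ are the uniform eigenvalue bounds on $A$, and we used $|y^\top R^\top A y|\le \|Ry\|\,\|Ay\|\le \gamma\lambda_+\|y\|^2$. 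The constant $\lambda_- - \gamma\lambda_+$ is positive by the condition number bound $\kappa(A)<1/\gamma$ in Assumption~\ref{assumption:M}. This is essentially the original JFB descent argument of~\cite{fung2022jfb}, and it gives $\langle v_{\theta,x},w_{\theta,x}\rangle \ge (\lambda_- - \gamma\lambda_+)\|M_\theta v_{\theta,x}\|^2$ for every $x$.

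\emph{Step 2 (from samples to expectations).} Integrability (Assumption~\ref{assumption:expectation_timeaverage}) justifies the following decomposition:
\begin{equation*}
\langle \mathbb{E}_x[v],\mathbb{E}_x[w]\rangle = \mathbb{E}_x\langle v,w\rangle - \mathbb{E}_x\langle v-E_v,\, w-E_w\rangle .
\end{equation*}
The first term is at least $(\lambda_- - \gamma\lambda_+)\,\mathbb{E}_x\|Mv\|^2$ by Step 1. By Jensen's inequality this is at least $(\lambda_- - \gamma\lambda_+)\|\mathbb{E}_x[Mv]\|^2$. Note that $M=M_\theta(u^\star_\theta;t,z)$ depends on $x$ through $z_x$, so we must keep $\mathbb{E}_x[Mv]$ rather than $M\,\mathbb{E}_x[v]$; this matches the way Assumption~\ref{assumption:expectation_integrand_inner_product} is stated. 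The covariance term is handled by Cauchy--Schwarz twice:
\begin{equation*}
|\mathbb{E}_x\langle v-E_v,w-E_w\rangle| \le \sqrt{\mathrm{Var}_x[v]\,\mathrm{Var}_x[w]} \le \max\bigl(\sqrt{\mathrm{Var}_x[v]},\sqrt{\mathrm{Var}_x[w]}\bigr)^2 \le \delta_{var}\|\mathbb{E}_x[Mv]\|^2,
\end{equation*}
where the last inequality is \eqref{eq:variance_vw}.

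\emph{Step 3 (conclusion).} Combining Steps 1 and 2 gives
\begin{equation*}
\langle \mathbb{E}_x[v],\mathbb{E}_x[w]\rangle \ge (\lambda_- - \gamma\lambda_+ - \delta_{var})\|\mathbb{E}_x[M_\theta v_{\theta,x}]\|^2 = \delta_\theta^2 .
\end{equation*}
Since $\delta_{var}<\lambda_- - \gamma\lambda_+$, the coefficient is positive, so $\delta_\theta$ is well defined and $\delta_\theta^2\ge 0$. The main subtlety is in Step 1: the inversion $h=\mathcal{J}_\theta^\top A M v$ must be justified. It relies on $MM^\top$ being nonsingular and on $\mathcal{J}_\theta$ being invertible, the latter holding because $\|R\|\le\gamma<1$. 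The other subtlety is keeping the $x$-dependence of $M_\theta$ inside the expectation in Step 2. Everything else is routine.
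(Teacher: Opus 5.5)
Your proposal is correct and follows the paper's proof essentially step for step. It uses the same substitution $\psi = M_\theta v_{\theta,x}$, giving $\langle v_{\theta,x}, w_{\theta,x}\rangle = \psi^\top \mathcal{J}_\theta^\top (M_\theta M_\theta^\top)^{-1}\psi$, the same sample-wise bound $(\lambda_- - \gamma\lambda_+)\|\psi\|^2$, Jensen, and the same mean/covariance decomposition closed by Cauchy--Schwarz and Assumption~\ref{assumption:expectation_integrand_inner_product}. The only variation is in the sample-wise bound: you split $\mathcal{J}_\theta^\top = I - R^\top$ and bound the two pieces directly, whereas the paper centers $A$ at $\bar\lambda I$ and cites coercivity of $\mathcal{J}_\theta^\top$ and $\|\mathcal{J}_\theta^\top\|\le 1+\gamma$ from the JFB paper. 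Both give exactly $\lambda_- - \gamma\lambda_+$, and your version is slightly more self-contained.
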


\newcommand{\lemmaExpectationIntegralProduct}[1]{
Under Assumptions ~\ref{assumption:T} - ~\ref{assumption:expectation_timeaverage},
\begin{equation*}
\mathbb{E}_x[\nabla_{\theta}J_x]^\top \mathbb{E}_x\left[\dJFB\right] \geq \epsilon_v  \left\| \mathbb{E}_x[\nabla_{\theta}J_x] \right\|^2, \quad \forall z, u, \theta.
\end{equation*}
}

\begin{lemma}
\label{lemma:expectation_integral_product}
\lemmaExpectationIntegralProduct{main}
\end{lemma}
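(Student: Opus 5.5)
We prove Lemma~\ref{lemma:expectation_integral_product} by writing each integrand as its time average plus a temporal fluctuation, using Lemma~\ref{lemma:expectation_integrand} pointwise in $t$, and absorbing the fluctuations with Assumption~\ref{assumption:expectation_timeaverage}.

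By integrability (Assumption~\ref{assumption:expectation_timeaverage}, item 1), Fubini lets us interchange $\mathbb{E}_x$ and $\int_0^T$. This gives
\[
\mathbb{E}_x[\nabla_\theta J_x]=\int_0^T \mathbb{E}_x[v_{\theta,x}(t)]\,dt = T\,\mathbb{E}_x[C_v],
\qquad
\mathbb{E}_x[\dJFB]=T\,\mathbb{E}_x[C_w].
\]
Write $\bar v=\mathbb{E}_x[C_v]$ and $\bar w=\mathbb{E}_x[C_w]$. Define the fluctuations $e_v(t)=\mathbb{E}_x[v_{\theta,x}(t)]-\bar v$ and $e_w(t)=\mathbb{E}_x[w_{\theta,x}(t)]-\bar w$. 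The target inner product equals $T^2\langle \bar v,\bar w\rangle$.

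For each $t$, Lemma~\ref{lemma:expectation_integrand} gives
\[
\langle \bar v+e_v(t),\,\bar w+e_w(t)\rangle\ \ge\ \delta_\theta^2 .
\]
Fix an arbitrary $t$. Assumption~\ref{assumption:expectation_timeaverage} bounds both $\|e_v(t)\|$ and $\|e_w(t)\|$ by a common quantity $r$. It also gives
\[
r^2\le \delta_\theta^2-\frac{\epsilon_v}{T^2}\|\mathbb{E}_x[\nabla_\theta J_x]\|^2 .
\]
Expanding the pointwise inequality yields
\[
\langle\bar v,\bar w\rangle \ \ge\ \delta_\theta^2-\langle \bar v,e_w\rangle-\langle e_v,\bar w\rangle-\langle e_v,e_w\rangle .
\]
If the cross terms vanished, Cauchy--Schwarz on $\langle e_v,e_w\rangle$ would give $\langle\bar v,\bar w\rangle\ge\delta_\theta^2-r^2\ge \frac{\epsilon_v}{T^2}\|\mathbb{E}_x[\nabla_\theta J_x]\|^2$. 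Multiplying by $T^2$ would then give the claim.

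\textbf{Handling the cross terms.} The zero-mean identity $\frac1T\int_0^T e_v\,dt=\frac1T\int_0^T e_w\,dt=0$ holds, but it does not remove the cross terms at a single $t$. Averaging the pointwise inequality over $t\in[0,T]$ does remove them:
\[
\frac1T\int_0^T\langle \bar v+e_v,\bar w+e_w\rangle\,dt=\langle\bar v,\bar w\rangle+\frac1T\int_0^T\langle e_v,e_w\rangle\,dt ,
\]
since $\bar v,\bar w$ are constant in $t$ and the $e$'s integrate to zero. The left side is at least $\delta_\theta^2$, but $\delta_\theta$ depends on $t$ through $M_\theta$. The step therefore needs the \emph{uniform} (infimum over $t$) versions of $\delta_\theta^2$ and of the bound on $r$, which Assumption~\ref{assumption:expectation_timeaverage} is stated to hold for all $t$. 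Bounding $\frac1T\int_0^T\langle e_v,e_w\rangle\,dt\le \sup_t\|e_v\|\|e_w\|\le r^2$ then gives
\[
\langle\bar v,\bar w\rangle\ge \delta_\theta^2-r^2\ge \frac{\epsilon_v}{T^2}\|\mathbb{E}_x[\nabla_\theta J_x]\|^2 .
\]
Multiplying by $T^2$ finishes the proof.

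\textbf{Main obstacle.} The delicate point is this time-averaging step: it requires a single $t$-independent $\delta_\theta$ and $r$ in the averaged inequality. We would make that uniformity explicit, taking the infimum of $\delta_\theta^2$ over $t$ consistently on both sides.
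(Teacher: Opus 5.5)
Your proof is correct and follows essentially the same route as the paper: split $\mathbb{E}_x[v_{\theta,x}(t)]$ and $\mathbb{E}_x[w_{\theta,x}(t)]$ into time average plus fluctuation, integrate over $[0,T]$ so the cross terms vanish by Fubini, and combine the pointwise lower bound $\delta_\theta^2$ from Lemma~\ref{lemma:expectation_integrand} with the Cauchy--Schwarz bound $\delta_\theta^2-\frac{\epsilon_v}{T^2}\|\mathbb{E}_x[\nabla_\theta J_x]\|^2$ on the fluctuation product. The uniformity in $t$ that you flag as the main obstacle is not needed: the paper keeps the $t$-dependent $\delta_\theta^2$ inside the integral, where it cancels pointwise between the two bounds (and because your $r$ does not depend on $t$, the assumption holding at every $t$ already gives $r^2\le\inf_t\delta_\theta^2-\frac{\epsilon_v}{T^2}\|\mathbb{E}_x[\nabla_\theta J_x]\|^2$, so your infimum version is also valid).
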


The Lemmas above are particularly important because they show that the JFB update remains positively aligned with the true gradient in expectation, both pointwise in time and after time integration, which leads to descent of the expected objective. Proofs of both Lemmas can be found in the Section~\ref{appendix:main_draft_proof} of the Appendix.

\subsection{Convergence of SGD Using JFB as the Stochastic Gradient}
\label{subsec:convergence}

We begin by introducing notation to formalize the stochasticity arising in the training process. Let $\{\xi_j\}_{j\ge0}$ denote a sequence of independent random variables representing the sampling procedure used to construct the JFB-based stochastic gradient. In particular, $\xi_j$ corresponds to the random draw of initial conditions $x \sim \rho$ used to compute the JFB update at iteration $j$.

We analyze the convergence of SGD when the Jacobian-Free Backpropagation (JFB) direction is used as a stochastic gradient surrogate. Specifically, we consider the iterative scheme
\begin{equation}
\theta_{j+1} = \theta_j - \alpha_j d_{\xi_j}^{\mathrm{JFB}}(\theta_j), \qquad j \ge 0,
\label{eq:SGD_iteration}
\end{equation}
for minimizing the objective in~\eqref{eq:training_problem1}--\eqref{eq:training_problem3} over $\theta \in \mathbb{R}^p$. Here, $d_{\xi_j}^{\mathrm{JFB}}(\theta_j)$ denotes the JFB update computed using either a single sample or a minibatch of samples with corresponding learning rate $\alpha_j$ at iteration $j$.

Following the notation of~\cite{bottou2018optimization}, we use $\mathbb{E}_{\xi_j}[\cdot]$ to denote the conditional expectation with respect to the randomness at iteration $j$, given the current iterate $\theta_j$. Since $\theta_j$ depends on the sequence of random variables $\{\xi_0, \xi_1, \ldots, \xi_{j-1}\}$, we also consider the total expectation of the objective with respect to all prior randomness, which we write as
\begin{equation}
\mathbb{E}[\mathbb{E}_x[J_x(\theta_j)]]
=
\mathbb{E}_{\xi_0}\Big[
\mathbb{E}_{\xi_1}\big[
\cdots
\mathbb{E}_{\xi_{j-1}}\big[
\mathbb{E}_x[J_x(\theta_j)]
\big]
\cdots
\big]
\Big].
\end{equation}

With this notation in place, we establish the following Lemma, which is used to prove the main result.

\newcommand{\lemmaExpectationDescent}[1]{
Under Assumptions ~\ref{assumption:T} - ~\ref{assumption:expectation_timeaverage}, JFB-based SGD iterations ~\eqref{eq:SGD_iteration} satisfy
\begin{equation*}
\begin{split}
\mathbb{E}_{\xi_j}[\mathbb{E}_x[J_x&(\theta_{j+1})]] - \mathbb{E}_x[J_x(\theta_j)] \leq \\
& -\alpha_j \epsilon_v \|\mathbb{E}_x[\nabla_{\theta}J_x(\theta_j)]\|^2 + \frac{\alpha_j^2L_JB_{max}^2T^2}{2 \beta}
\end{split}
\end{equation*}}

\begin{lemma}
\label{lemma:expectation_descent}
\lemmaExpectationDescent{main}
\end{lemma}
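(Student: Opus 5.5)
We follow the standard descent-lemma route for SGD (as in \cite{bottou2018optimization}), using the alignment bound of Lemma~\ref{lemma:expectation_integral_product} in place of unbiasedness. Write $F(\theta) = \mathbb{E}_x[J_x(\theta)]$.

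\textbf{Step 1 (smoothness of $F$).} By Assumption~\ref{assumption:T}, $\nabla_\theta J_x$ is $L_J$-Lipschitz for every $x$. Since the integrability part of Assumption~\ref{assumption:expectation_timeaverage} lets us exchange expectation and gradient, $\nabla F = \mathbb{E}_x[\nabla_\theta J_x]$ is also $L_J$-Lipschitz. The quadratic upper bound then gives
\begin{equation*}
F(\theta_{j+1}) - F(\theta_j) \le -\alpha_j \nabla F(\theta_j)^\top d_{\xi_j}^{\mathrm{JFB}}(\theta_j) + \frac{L_J\alpha_j^2}{2}\|d_{\xi_j}^{\mathrm{JFB}}(\theta_j)\|^2 .
\end{equation*}

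\textbf{Step 2 (first-order term).} Take $\mathbb{E}_{\xi_j}$ conditional on $\theta_j$. Because $\xi_j$ is an independent draw from $\rho$ (a single sample or a minibatch average), we have $\mathbb{E}_{\xi_j}[d_{\xi_j}^{\mathrm{JFB}}(\theta_j)] = \mathbb{E}_x[\dJFB(\theta_j)]$. The linear term therefore becomes $-\alpha_j\,\mathbb{E}_x[\nabla_\theta J_x]^\top \mathbb{E}_x[\dJFB]$. Lemma~\ref{lemma:expectation_integral_product} bounds this by $-\alpha_j\epsilon_v\|\mathbb{E}_x[\nabla_\theta J_x(\theta_j)]\|^2$.

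\textbf{Step 3 (second-order term).} We bound $\|d^{\mathrm{JFB}}\|$ deterministically, sample by sample. From \eqref{eq:full_derivatives}--\eqref{eq:integrand_definitions},
\begin{equation*}
\|\dJFB\| \le \int_0^T \|M_\theta^\top h_{\theta,x}\|\,dt \le \int_0^T \sigma_{\max}(M_\theta)\,\|h_{\theta,x}\|\,dt \le \frac{T B_{\max}}{\sqrt{\beta}} .
\end{equation*}
This uses Assumption~\ref{assumption:h} and \eqref{eq:bounded_singular_value_M}. Squaring gives $\|\dJFB\|^2 \le T^2B_{\max}^2/\beta$. For a minibatch, the direction is an average of such terms, so by the triangle inequality the same bound holds. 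Hence $\mathbb{E}_{\xi_j}\|d_{\xi_j}^{\mathrm{JFB}}\|^2 \le T^2B_{\max}^2/\beta$, and the second-order term contributes at most $\alpha_j^2L_JB_{\max}^2T^2/(2\beta)$.

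Combining Steps 1--3 gives the stated inequality.

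\textbf{Main obstacle.} The delicate point is Step 1. Assumption~\ref{assumption:T} only asserts Lipschitz continuity of $\nabla_\theta J_x$ "in some open subset" of the domain, so the quadratic upper bound must be applied along the segment $[\theta_j,\theta_{j+1}]$. We would handle this by assuming the iterates and the segments between them stay in that region, or by reading the assumption as global. We would also need dominated convergence to pass $\nabla$ through $\mathbb{E}_x$; the integrability assumption together with the uniform bound on $\nabla_\theta J_x$ (via $v_{\theta,x}$) justifies this. The remaining steps are routine once Lemma~\ref{lemma:expectation_integral_product} is available.
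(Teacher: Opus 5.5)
Your proposal is correct and follows the paper's proof essentially step for step. Both use the $L_J$-smoothness quadratic upper bound on $F=\mathbb{E}_x[J_x]$, then the identity $\mathbb{E}_{\xi_j}[d_{\xi_j}^{\mathrm{JFB}}(\theta_j)]=\mathbb{E}_x[\dJFB(\theta_j)]$ together with Lemma~\ref{lemma:expectation_integral_product} for the linear term, and finally the uniform bound $\|\dJFB\|^2\le B_{\max}^2T^2/\beta$ from Assumptions~\ref{assumption:h}--\ref{assumption:M} for the quadratic term. The only difference is that you use the triangle inequality where the paper uses Cauchy--Schwarz and convexity of $\|\cdot\|^2$, which gives the same constant; you also make explicit the domain of the Lipschitz assumption and the interchange of $\nabla_\theta$ with $\mathbb{E}_x$, which the paper passes over.
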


With this result, the main results of this paper can be proven.

\newcommand{\theoremExpectationCesaroConvergence}[1]{
Suppose the sequence of learning rates $\{\alpha_j\}_{j=0}^{\infty}$ is monotonically decreasing and satisfies $\sum_{j=0}^{\infty}\alpha_j = \infty$, $\sum_{j=0}^{\infty}\alpha_{j}^2 < \infty$, and $0 < \alpha_0 \leq \frac{2 \epsilon_v}{L_{J}(1-\gamma)^2}$. Let $A_K= \sum_{j=0}^K\alpha_j$. Then, under Assumptions ~\ref{assumption:T}-~\ref{assumption:expectation_timeaverage}, the JFB-based SGD iteration \eqref{eq:SGD_iteration} satisfies
\begin{equation*}
\lim_{K \rightarrow \infty} \mathbb{E}\left[ \frac{1}{A_K}\sum_{j=0}^{K}\alpha_j\left\| \mathbb{E}_x[\nabla_{\theta}J_{x}(\theta_j)]\right\|^2 \right] = 0.
\end{equation*}}

\begin{theorem}
\label{theorem:expectation_cesaro_convergence}
\theoremExpectationCesaroConvergence{main}
\end{theorem}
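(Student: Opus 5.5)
The argument is the standard telescoping proof for nonconvex SGD in the style of \cite{bottou2018optimization}, with Lemma~\ref{lemma:expectation_descent} supplying the one-step descent inequality. Write $F(\theta) := \mathbb{E}_x[J_x(\theta)]$ and $g_j := \|\mathbb{E}_x[\nabla_\theta J_x(\theta_j)]\|^2$.

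\textbf{Step 1: pass to total expectation.} Take total expectation of Lemma~\ref{lemma:expectation_descent} over $\xi_0,\dots,\xi_{j-1}$, using the tower property. This gives
\begin{equation*}
\mathbb{E}[F(\theta_{j+1})] - \mathbb{E}[F(\theta_j)] \le -\alpha_j\epsilon_v\,\mathbb{E}[g_j] + \frac{\alpha_j^2 L_J B_{\max}^2 T^2}{2\beta}.
\end{equation*}

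\textbf{Step 2: telescope.} Sum from $j=0$ to $K$. Use the lower bound $J_{\inf}$ from Assumption~\ref{assumption:T}, which bounds $\mathbb{E}[F(\theta_{K+1})]$ from below. Rearranging yields
\begin{equation*}
\epsilon_v\,\mathbb{E}\Big[\sum_{j=0}^K \alpha_j g_j\Big] \le F(\theta_0) - J_{\inf} + \frac{L_J B_{\max}^2T^2}{2\beta}\sum_{j=0}^K\alpha_j^2.
\end{equation*}
Because $\sum_j \alpha_j^2<\infty$, the right-hand side is bounded uniformly in $K$ by a constant $C<\infty$.

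\textbf{Step 3: conclude.} Divide by $\epsilon_v A_K$ and pull the deterministic $1/A_K$ inside the expectation. Then
\begin{equation*}
0 \le \mathbb{E}\Big[\frac{1}{A_K}\sum_{j=0}^K\alpha_j g_j\Big] \le \frac{C}{\epsilon_v A_K}.
\end{equation*}
Since $A_K\to\infty$ by $\sum_j\alpha_j=\infty$, the right-hand side tends to $0$, which proves the claim.

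\textbf{Role of the step-size conditions.} The monotonicity of $\alpha_j$ and the cap $\alpha_0\le 2\epsilon_v/(L_J(1-\gamma)^2)$ are not needed in this route. Lemma~\ref{lemma:expectation_descent} already absorbs the second-order term into the constant $B_{\max}^2/\beta$, so the argument goes through without them. If the intended proof instead keeps a term of the form $\alpha_j^2 L_J\,\mathbb{E}\|d^{\rm JFB}\|^2$ bounded by a multiple of $g_j$, the cap on $\alpha_0$ together with monotonicity guarantees $\alpha_j\le\alpha_0$ for all $j$. That keeps the net coefficient in front of $g_j$ at least a positive multiple of $\epsilon_v\alpha_j$, and the same telescoping then applies.

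\textbf{Main obstacle.} The only delicate point is justifying that $\mathbb{E}[F(\theta_{K+1})]\ge J_{\inf}$. Assumption~\ref{assumption:T} gives the lower bound only on an open subset of the domain. I would handle this by assuming, as is implicit in the paper, that the iterates remain in that subset; alternatively, one can read the bound as holding globally for the relevant $\theta$. Measurability and integrability of $g_j$ follow from Assumption~\ref{assumption:expectation_timeaverage}.
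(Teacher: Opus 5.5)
Your proposal is correct and follows essentially the same route as the paper: take total expectation of the one-step bound from Lemma~\ref{lemma:expectation_descent}, telescope, use the lower bound $J_{\inf}$, divide by $A_K$, and let $A_K\to\infty$ while $\sum_j\alpha_j^2<\infty$. Your observation that the monotonicity of $\alpha_j$ and the cap on $\alpha_0$ are never used is consistent with the paper's argument, whose telescoping step likewise does not invoke them.
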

In words, the weighted Cesaro sum of the sequence $\{ \left\| \mathbb{E}_{x}[\nabla_{\theta}J_{x}(\theta_j)]\right\|^2\}_{j=0}^{\infty}$ converges in (total) expectation to 0. 
Using Theorem~\ref{theorem:expectation_cesaro_convergence}, one can then use standard SGD analysis to show the following theorem and corollary.

\newcommand{\theoremExpectationLiminf}[1]{
Under the assumptions of Theorem ~\ref{theorem:expectation_cesaro_convergence}, the JFB-based SGD iteration \eqref{eq:SGD_iteration} satisfies
\begin{equation*}
\liminf_{j \rightarrow \infty} \mathbb{E}\left[\left\| \mathbb{E}_{x}[\nabla_{\theta}J_{x}(\theta_j)]\right\|^2 \right] = 0.
\end{equation*}
}

\begin{theorem}
\label{theorem:expectation_liminf}
\theoremExpectationLiminf{main}
\end{theorem}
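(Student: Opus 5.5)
We derive Theorem~\ref{theorem:expectation_liminf} from Theorem~\ref{theorem:expectation_cesaro_convergence} by contradiction. For brevity, write $g_j := \mathbb{E}\big[\|\mathbb{E}_x[\nabla_\theta J_x(\theta_j)]\|^2\big]$. Each $g_j$ is a nonnegative real number, so $\liminf_{j\to\infty} g_j \ge 0$ automatically, and only the reverse inequality needs proof.

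Suppose, to the contrary, that $\liminf_{j\to\infty} g_j = 2c > 0$ (including the case $+\infty$, where we simply pick any finite $c>0$). By definition of the limit inferior, there is an index $j_0$ such that $g_j \ge c$ for all $j \ge j_0$.

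Next we move the total expectation inside the weighted average. The weights $\alpha_j/A_K$ are deterministic and nonnegative, so linearity of expectation gives
\begin{equation*}
\mathbb{E}\Big[\frac{1}{A_K}\sum_{j=0}^K \alpha_j \|\mathbb{E}_x[\nabla_\theta J_x(\theta_j)]\|^2\Big] = \frac{1}{A_K}\sum_{j=0}^K \alpha_j g_j .
\end{equation*}
Tonelli's theorem is enough here because every summand is nonnegative, so no integrability issue arises.

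We then bound this average from below for $K \ge j_0$. Dropping the first $j_0$ terms, which are nonnegative, we get
\begin{equation*}
\frac{1}{A_K}\sum_{j=0}^K \alpha_j g_j \;\ge\; \frac{c}{A_K}\sum_{j=j_0}^K \alpha_j \;=\; c\,\frac{A_K - A_{j_0-1}}{A_K}.
\end{equation*}
The step sizes satisfy $\sum_j \alpha_j = \infty$, so $A_K \to \infty$, while $A_{j_0-1}$ is a fixed finite number. The right-hand side therefore tends to $c > 0$. This contradicts Theorem~\ref{theorem:expectation_cesaro_convergence}, which states that the left-hand side tends to $0$. Hence $\liminf_{j\to\infty} g_j = 0$.

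The only step needing care is the interchange of the outer expectation with the weighted sum, and it is harmless. Nonnegativity justifies it, and the learning rates $\alpha_j$ are deterministic, so they can be pulled out of $\mathbb{E}$. Beyond that, the argument rests on the divergence $A_K\to\infty$, which is precisely the hypothesis $\sum_j\alpha_j=\infty$ carried over from Theorem~\ref{theorem:expectation_cesaro_convergence}. No further properties of JFB or of Assumptions~\ref{assumption:T}--\ref{assumption:expectation_timeaverage} are used beyond what enters Theorem~\ref{theorem:expectation_cesaro_convergence}.
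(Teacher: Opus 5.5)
Your proof is correct and follows the same route as the paper: assume the $\liminf$ is positive, use linearity of expectation to pull the total expectation through the deterministic weighted sum, and contradict Theorem~\ref{theorem:expectation_cesaro_convergence}. Your explicit tail bound $c(A_K - A_{j_0-1})/A_K \to c > 0$ makes precise the step the paper only gestures at, where it loosely says the right-hand side ``diverges''; in fact that side stays bounded below by a positive constant, which is what yields the contradiction.
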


Using Theorem ~\ref{theorem:expectation_cesaro_convergence}, we can also prove convergence in probability to a critical point.
\newcommand{\corollaryConvergenceProbability}[1]{
Suppose the assumptions of Theorem ~\ref{theorem:expectation_cesaro_convergence} hold. For any $K \in \mathbb{N}$ let $j(K) \in \{0,1,...,K\}$ represent a random index chosen with probabilities proportional to $\{\alpha_j\}_{j=0}^{K}$. Then, $\{\left\| \mathbb{E}_x[\nabla_{\theta}J_{x}(\theta_j)]\right\|\}_{j=0}^{K} \rightarrow 0$ as $K \rightarrow \infty$ in probability.}
\begin{corollary}
\label{corollary:convergence_probability}
\corollaryConvergenceProbability{main}
\end{corollary}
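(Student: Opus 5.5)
The corollary follows from Theorem~\ref{theorem:expectation_cesaro_convergence} by rewriting the weighted Ces\`aro average as an expectation over the random index and then applying Markov's inequality. Write $g_j := \|\mathbb{E}_x[\nabla_{\theta}J_x(\theta_j)]\|^2$, which is a nonnegative random variable measurable with respect to $\xi_0,\dots,\xi_{j-1}$. Let $j(K)$ be drawn independently of the SGD randomness, with $\mathbb{P}(j(K)=j)=\alpha_j/A_K$ for $j\in\{0,\dots,K\}$.

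The first step is to condition on the trajectory $\{\theta_j\}_{j=0}^K$ and average over the random index:
\begin{equation*}
\mathbb{E}\big[g_{j(K)}\big] = \mathbb{E}\Big[\sum_{j=0}^K \frac{\alpha_j}{A_K} g_j\Big] = \mathbb{E}\Big[\frac{1}{A_K}\sum_{j=0}^{K}\alpha_j g_j\Big].
\end{equation*}
Here the outer expectation on the left is joint over the SGD noise and $j(K)$. This uses only the tower property and the independence of $j(K)$ from the iterates. Nonnegativity of the $g_j$ ensures there are no integrability issues, since Tonelli's theorem applies.

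Next, Theorem~\ref{theorem:expectation_cesaro_convergence} states that the right-hand side tends to $0$ as $K\to\infty$. Hence $g_{j(K)}\to 0$ in $L^1$, and therefore in mean.

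Finally, fix $\varepsilon>0$ and apply Markov's inequality to the nonnegative variable $g_{j(K)}$:
\begin{equation*}
\mathbb{P}\Big(\big\|\mathbb{E}_x[\nabla_\theta J_x(\theta_{j(K)})]\big\| \ge \varepsilon\Big) = \mathbb{P}\big(g_{j(K)}\ge \varepsilon^2\big) \le \frac{\mathbb{E}[g_{j(K)}]}{\varepsilon^2} \longrightarrow 0 .
\end{equation*}
This is exactly convergence in probability of $\|\mathbb{E}_x[\nabla_\theta J_x(\theta_{j(K)})]\|$ to $0$, which is the intended meaning of the statement.

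The only delicate point is the first step: making precise that $j(K)$ is sampled independently of $\{\xi_j\}$, so that its conditional expectation given the iterates is the weighted average. Everything else is Markov's inequality.
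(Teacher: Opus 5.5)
Your proposal is correct and follows essentially the same route as the paper: Markov's inequality applied to $\|\mathbb{E}_x[\nabla_\theta J_x(\theta_{j(K)})]\|^2$, the tower property over the independently drawn index $j(K)$, and Theorem~\ref{theorem:expectation_cesaro_convergence} to drive the resulting weighted Ces\`aro average to zero. Your explicit identity $\mathbb{E}[g_{j(K)}] = \mathbb{E}\bigl[\frac{1}{A_K}\sum_{j=0}^K \alpha_j g_j\bigr]$ is the cleaner justification of the final limit; the paper instead detours through $\mathbb{E}[\alpha_j g_j]\to 0$, which does not by itself give that limit.
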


Finally, we note that it is possible to relax Assumption~\ref{assumption:expectation_integrand_inner_product} and instead obtain convergence to a \emph{neighborhood} of the stationary point instead. See Assumption ~\ref{assumption:app_weak_bound_on_integrand_var} and Theorem ~\ref{theorem:appendix_theorem} in Section C of the Appendix for more details on this weaker assumption. Moreover, under an assumption that is slightly stronger than Assumption ~\ref{assumption:app_weak_bound_on_integrand_var} but significantly weaker than Assumption ~\ref{assumption:expectation_integrand_inner_product}, to prove convergence in expectation to a critical point. See Assumption ~\ref{assumption:limiting_bound_on_integrand_var} and Theorem ~\ref{theorem:limiting_theorem} in Section D of the Appendix for more information.

\section{Experiments}
\label{sec:experiments}
We first empirically verify the assumptions underlying our convergence analysis that can be evaluated directly in practice. We then compare JFB with existing backpropagation methods for implicit neural networks on low- to moderate-dimensional benchmarks. Finally, we demonstrate the effectiveness and scalability of the proposed approach on a range of high-dimensional optimal control problems, including optimal multi-agent consumption–savings and multi-agent quadrotor and bicycle control. In the quadrotor and consumption–savings settings, the use of an exponential running cost leads to an implicit $H$ (see Section~\ref{app:consumption_description} for a description of consumption savings Hamiltonian), while in the multi-agent bicycle problem, the nonlinearity of the dynamics automatically renders $H$ implicit. While the quadrotor and multi-bicycle dynamics are well established, the optimal consumption–savings model is less standard; we therefore provide a detailed description of this problem in Appendix~\ref{app:consumption_description}.

For concreteness, we instantiate the fixed-point operator $T_\theta$ using a simple gradient-based update derived from the Hamiltonian optimality condition; however, our theoretical results apply more broadly to general choices of $T_\theta$ that satisfy the stated assumptions.

\begin{figure}[t]
        \centering
        \includegraphics[width=\linewidth]{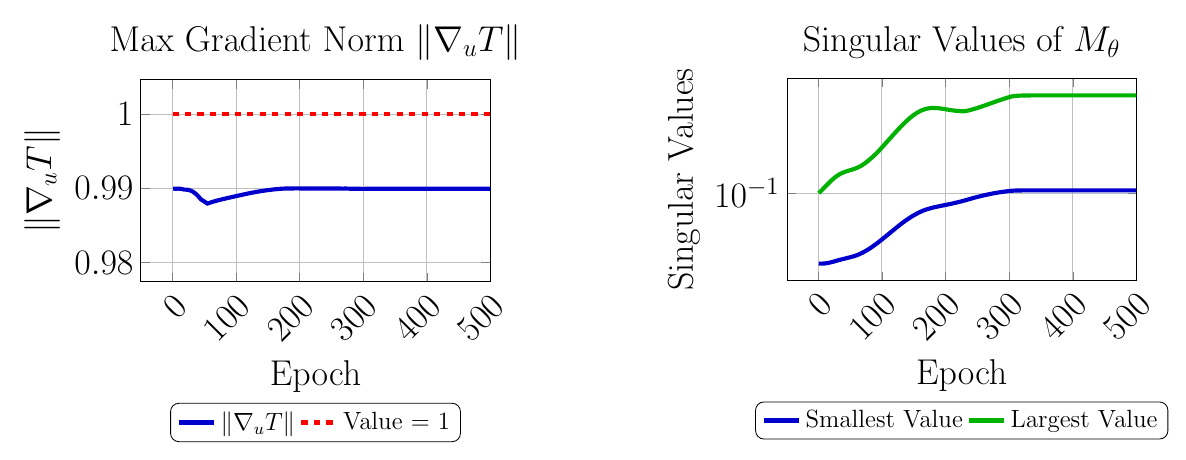}
        \caption{\small{Numerical verification on quadrotor experiment. (Left) Maximum singular value of the Jacobian of the operator $T_\theta$, evaluated on the worst-case sample within each minibatch, illustrating that the operator remains contractive throughout training. (Right) Smallest and largest singular values of $M_{\theta}$, computed batch-wise over all time steps for the single quadrotor experiment, confirming the full row-rank property of $M_{\theta}$ and the boundedness of $\sigma_{\max}(M_{\theta})$.}}

        \label{fig:singular_eigen_values}
\end{figure} 

\begin{figure}[t]
        \centering
        {Angle Between $\mathbb{E}_x[\nabla_{\theta}J_x(\theta_j)]$ and $\mathbb{E}_x[\dJFB(\theta_j)]$} 
        \\
        \includegraphics[width=1.0\linewidth]{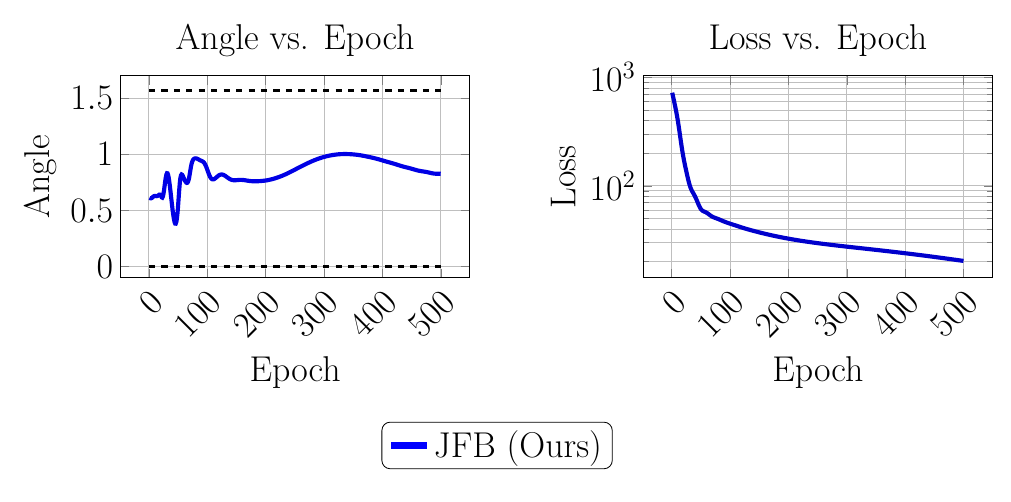}
        \caption{\small{Angle between $\mathbb{E}_x[\nabla_{\theta}J_x]$ and $\mathbb{E}_x[\dJFB]$ (Left) plotted alongside loss vs. epoch (Right). The values plotted are the largest batch-wise angles. $\mathbb{E}_x[\nabla_{\theta}J_x]$ is computed using AD. The dashed lines are at the angles $0$ and $\frac{\pi}{2}$}.}
        \label{fig:angle}
\end{figure}

\subsection{Theoretical Verification}
To empirically verify key theoretical properties of JFB, we consider an optimal control task involving a single quadrotor. We focus on assumptions from the convergence analysis that can be evaluated directly during training, without requiring access to the true optimal solution or adjoint variable. 

\textbf{Contractivity}. We examine whether the operator $T_\theta$, instantiated via a gradient-based update with fixed stepsize 0.01 to solve~\eqref{eq:training_problem3}, is contractive. Since the contractivity factor depends on the learned parameters $\theta$, this property cannot be directly enforced and must instead be verified empirically. As shown in Figure~\ref{fig:singular_eigen_values}, the operator remains contractive at every training iteration, where the reported gradient corresponds to the worst-case sample within each minibatch. Notably, the network consistently learns parameters that preserve contractivity throughout training.

\textbf{Rank and Conditioning of $M_\theta$}. Next, we examine the full-rank condition and boundedness of the largest singular value of $M_\theta$ appearing in Assumption~\ref{assumption:M}. As shown in Figure~\ref{fig:singular_eigen_values}, the singular values of $M_\theta$ remain bounded above and bounded away from zero throughout training, which ensures that $M_\theta$ retains full rank. Consequently, $\lambda_{\max}\!\left((M_\theta M_\theta^\top)^{-1}\right)$ remains bounded above and $\lambda_{\min}\!\left((M_\theta M_\theta^\top)^{-1}\right)$ is bounded away from zero.

\textbf{Descent Verification}. We verify that JFB produces a descent direction in practice. For each minibatch, we compute both the average JFB update and the corresponding average of the true gradients, and evaluate the angle between them. As shown in Figure~\ref{fig:angle} (left panel), this angle remains strictly between $0$ and $\pi/2$ throughout training, indicating consistent alignment. In particular, the angle between $\mathbb{E}_x[\nabla_{\theta}J_x]$ and $\mathbb{E}_x[\dJFB]$ remains well within $[0,\pi/2)$, which implies that the inner product $\mathbb{E}_x[\nabla_{\theta}J_x]^\top \mathbb{E}_x[\dJFB]$ is uniformly bounded away from zero.

While it is not practical to verify each remaining assumption leading to Lemma~\ref{lemma:expectation_descent} individually, we instead demonstrate a consistent decrease in the training loss (right panel of Figure~\ref{fig:angle}). This observed descent provides empirical evidence that the remaining assumptions are reasonable in practice and that the conclusion of Lemma~\ref{lemma:expectation_descent} holds. Accordingly, the angle and descent checks presented here constitute the most meaningful empirical validation of the theory, as they directly assess whether the stochastic JFB updates act as descent directions. Moreover, it was not possible to perform the same numerical verification on larger experiments due to the excessive memory consumption of automatic differentiation needed to compute the exact gradient.
\begin{figure}[t]
        \centering
        \textbf{Single Quadrotor} 
        \\
        \includegraphics[width=1.0\linewidth]{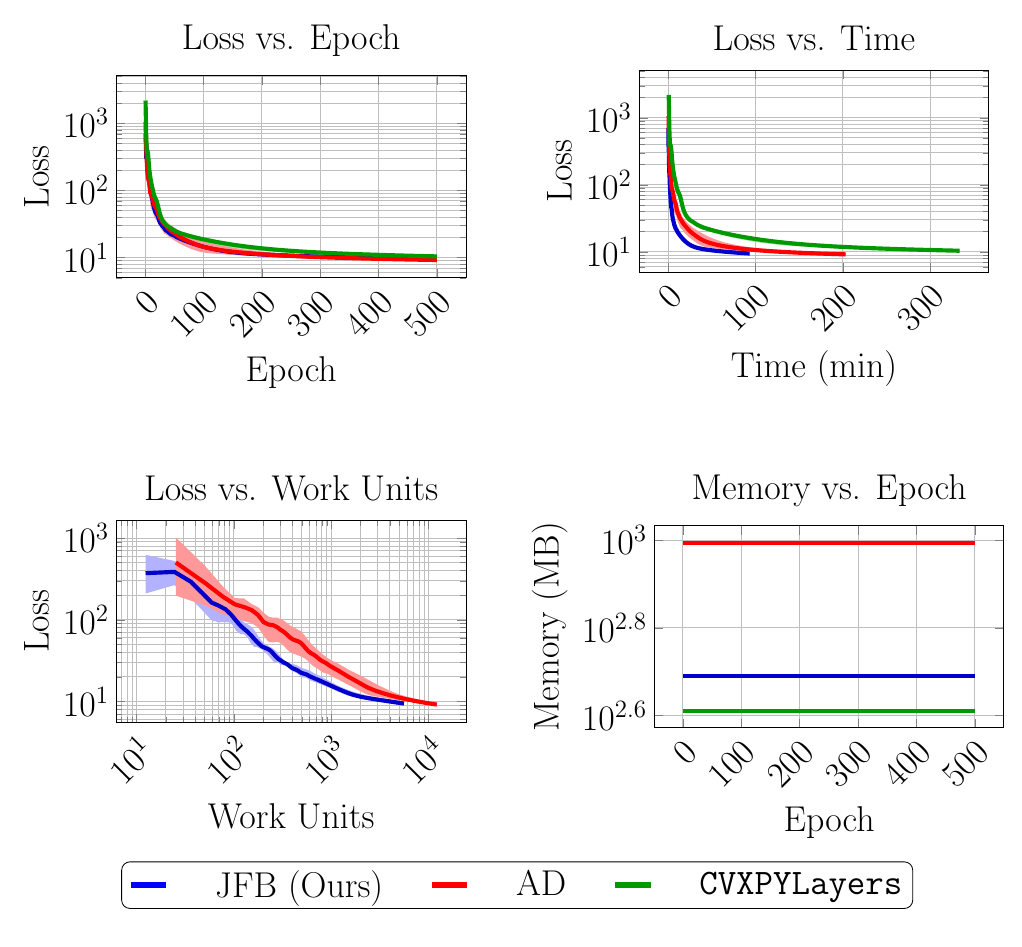}
        \caption{\small{Comparison of JFB, automatic differentiation (AD), and \texttt{CVXPYLayers}~\cite{agrawal2019differentiable} (Implicit Differentiation) for training the value function for a quadrotor across four metrics. (Top Left) Loss versus training epochs. (Top Right) Loss plotted against cumulative runtime in minutes. (Bottom Left) Loss plotted against cumulative work units, with one work unit being one evaluation of $\frac{\partial T_{\theta}}{\partial \theta}$, which is equivalent to backpropagation through one application of $T_\theta$. (Bottom Right) Maximum GPU memory usage per training epoch.}}
        \label{fig:single_quadrotor}
\end{figure}
\begin{figure}[t]
        \centering
        \textbf{6 Quadrotors} 
        \\
        \includegraphics[width=1.0\linewidth]{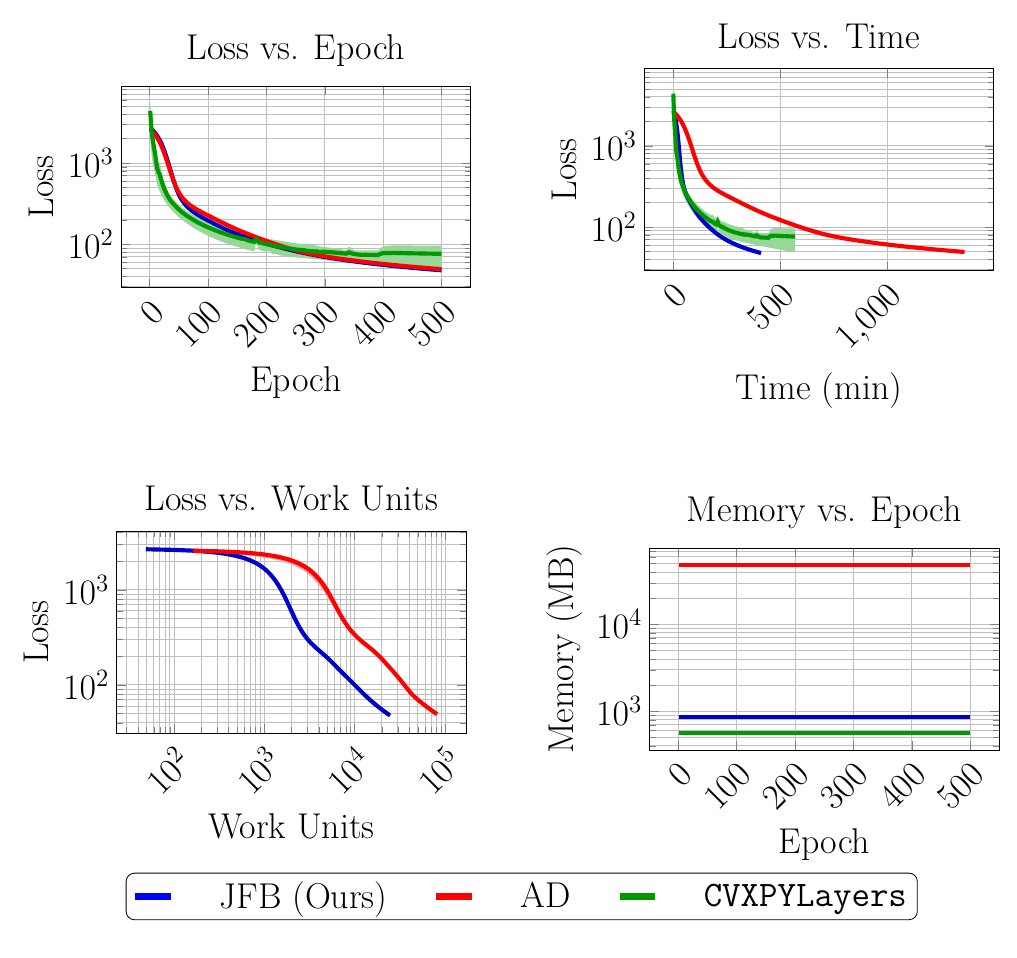}
        \caption{\small{Comparison of JFB, automatic differentiation (AD), and \texttt{CVXPYLayers} (Implicit Differentiation) for training the value function for 6 quadrotors across three metrics. (Top Left) Loss versus training epochs. (Top Right) Loss plotted against cumulative runtime in minutes. (Bottom Left) Loss plotted against cumulative work units. (Bottom Right) Maximum GPU memory usage per training epoch.}}
        \label{fig:6_quadrotors}
\end{figure}
\begin{figure}[t]
    \centering
    \begin{tabular}{ccc}
        \small{\textbf{JFB}}
        &
        \small{\textbf{CVXPYLayers}}
        &
        \small{\textbf{AD}}
        \\        \includegraphics[width=0.3\linewidth]{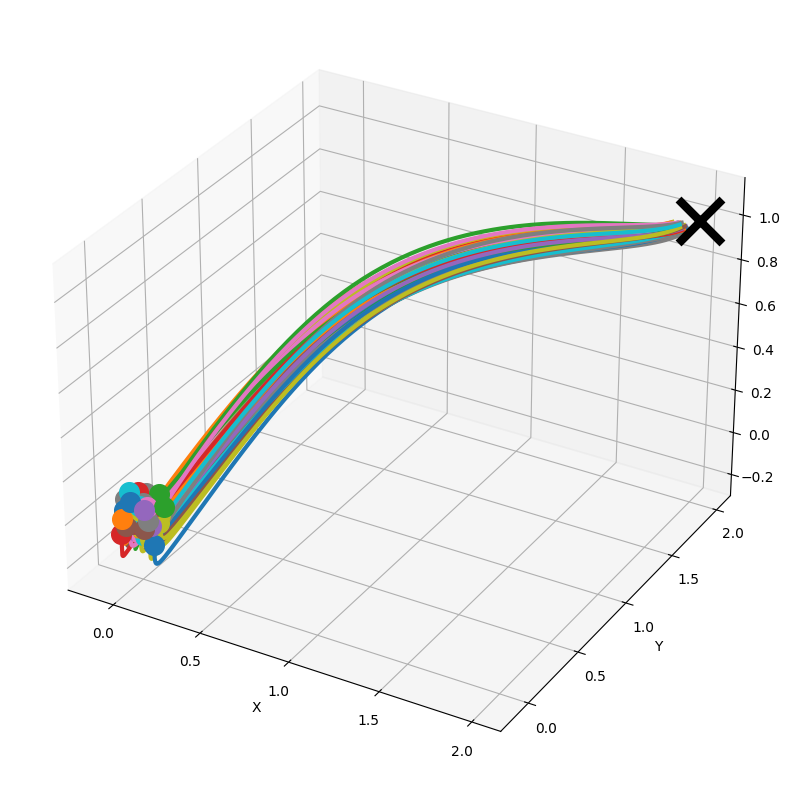}
        &
        \includegraphics[width=0.3\linewidth]{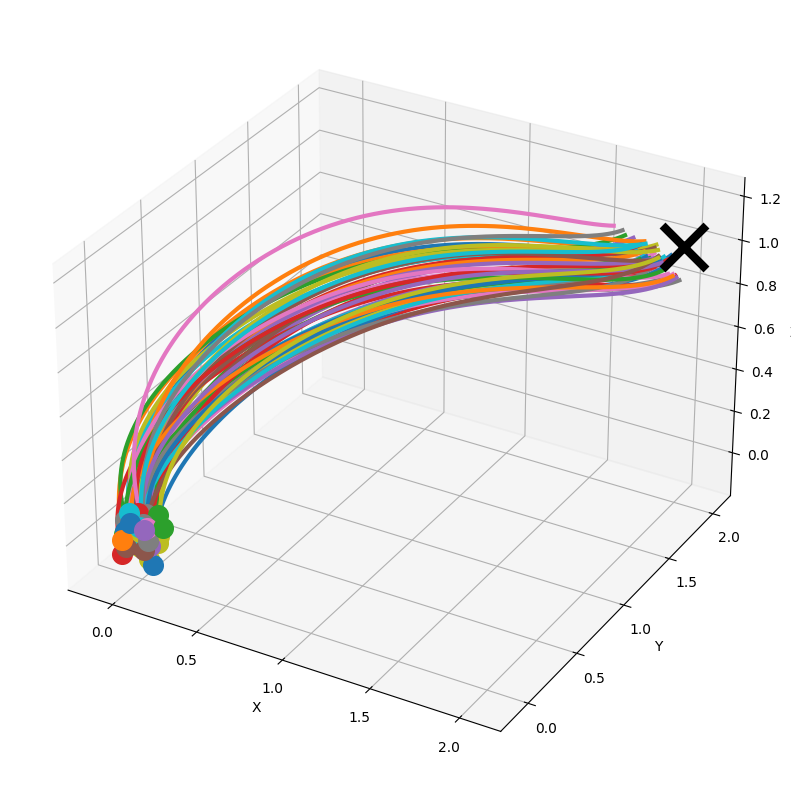}
        &
        \includegraphics[width=0.3\linewidth]{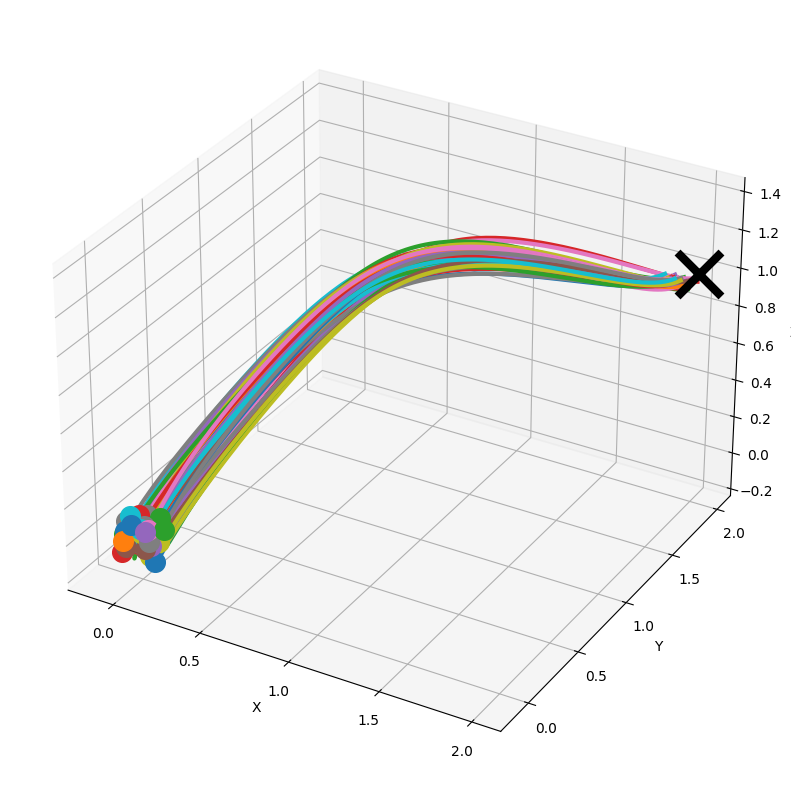}
    \end{tabular}
    \caption{Trajectories of trained quadrotors using JFB (left panel), \texttt{CVXPYLayers} (middle panel), and AD (right panel).}
    \label{fig:quadrotor_trajectories}
\end{figure}

\subsection{Comparison in Low- and Moderate-Dimensional Settings}

We compare several implicit deep learning approaches, including the proposed JFB method, \texttt{CVXPYLayers}~\cite{agrawal2019differentiable}, and traditional automatic differentiation (AD)~\cite{paszke2019pytorch}. To ensure a fair comparison, we focus on low- and moderate-dimensional problems, specifically, single-quadrotor and six-quadrotor control tasks, which remain tractable for all methods.
For all experiments, learning rates were chosen to yield the best empirical performance. Specifically, PyTorch's \texttt{ReduceLROnPlateau} LR scheduler was used with initial learning rate $0.01$, decay factor $0.5$, and patience $10$.

\begin{table}[t]
  \centering
  \sisetup{
    detect-weight=true,
    detect-inline-weight=math,
    table-number-alignment=center,
    table-format=4.3
  }
  \captionof{table}{Final Control Objective ~\eqref{eq:training_problem1}-\eqref{eq:training_problem3} Comparison for Low- and Moderate-dimensional Problems.}
  \label{tab:final_losses}
  \small
  \setlength{\tabcolsep}{6pt}
  \begin{tabular}{l
                  S[table-format=4.3]
                  S[table-format=4.3]
                  S[table-format=4.3]}
    \toprule
    & {AD} & \texttt{CVXPYLayers} & JFB \\
    \midrule
    Single Quadrotor               & 37.720 & 93.653   & \textbf{30.406} \\
    Six Quadrotors                & 72.846 & 39.458 & \textbf{24.150} \\
    \bottomrule
  \end{tabular}
\end{table}

As shown in Figures~\ref{fig:single_quadrotor} and~\ref{fig:6_quadrotors}, all methods exhibit comparable loss trajectories when measured against training epochs (top left panels). However, JFB converges substantially faster when performance is evaluated against wall-clock time (top right panels). This efficiency arises from avoiding both the storage of fixed-point iterations required by AD (bottom right) and the per-time-step linear solves required by \texttt{CVXPYLayers}. In addition, we visualize the resulting position trajectories for the quadrotor problem in Figure~\ref{fig:quadrotor_trajectories}, which shows that JFB produces trajectories that are qualitatively comparable to those obtained using automatic differentiation and noticeably closer to the target than those obtained using \texttt{CVXPYLayers}.

To provide a fairer comparison between AD and JFB, we also report loss versus \emph{work units}, defined as the number of applications of $\partial T_\theta / \partial \theta$ (bottom left panels). We do not report this metric for \texttt{CVXPYLayers}, as its gradient computations are performed entirely as a black box; however, the reported runtimes indicate that it is likely less efficient than JFB but more efficient than AD. Finally, we note that the six-quadrotor setting represents the highest-dimensional problem that could be solved using AD before encountering memory limitations.
Finally, Table~\ref{tab:final_losses} reports the final control objective values after training for both the single- and six-quadrotor experiments. While all methods achieve comparable final losses, JFB reaches these solutions with substantially faster runtimes and fewer work units, which highlights its computational efficiency.

\begin{figure}[t]
\centering
    \textbf{Control Objectives}
    \\
  \includegraphics[width=\linewidth, trim=28 0 0 0, clip]{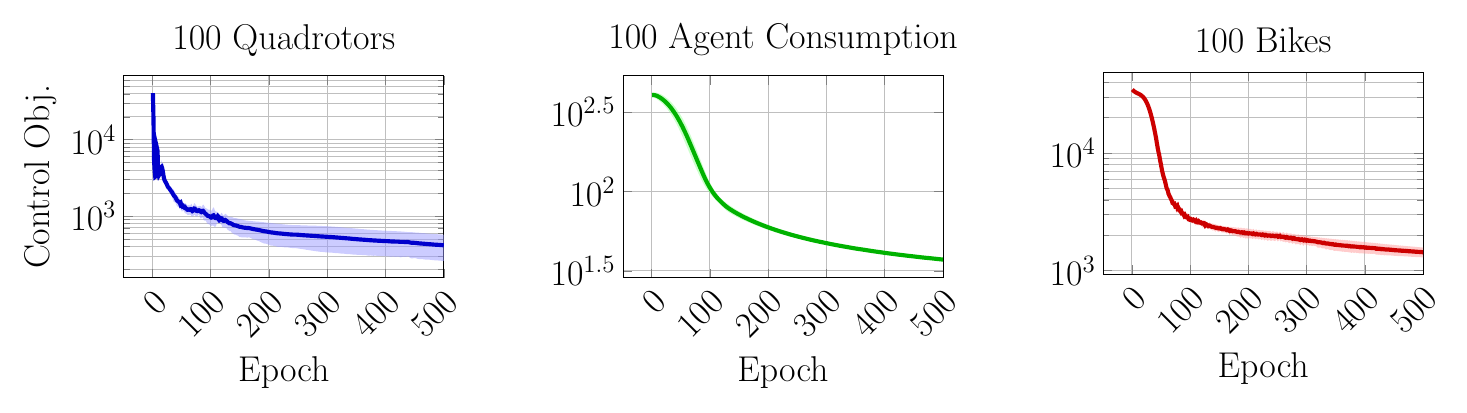}
  \caption{Training trajectories of the control objective~\eqref{eq:training_problem1}-\eqref{eq:training_problem3} for the high-dimensional problems using JFB. Solid lines show the mean over three runs, with shaded regions indicating the best and worst values per epoch. The memory required per epoch is as follows. For both 100 quadrotors and 100 bicycles, JFB uses $36287.0$ MB vRAM per epoch. For 100 agent consumption savings, JFB uses $5411.0$ MB vRAM per epoch.}
  \label{fig:jfb_high_dimensional_problems}
\end{figure}

\subsection{High-Dimensional Experiments}
We next highlight the effectiveness of JFB on high-dimensional optimal control problems with implicit Hamiltonians. Traditional automatic differentiation (AD) is \emph{infeasible} in these settings due to excessive memory consumption, as it requires storing every application of $T_\theta$ within the computational graph (see~\cite{fung2022jfb} for a detailed discussion). \texttt{CVXPYLayers} is similarly \emph{infeasible} for the consumption–savings and bicycle problems, since neither admits dynamics that are linear in the control variable $u$, and is prohibitively slow for the 100-quadrotor experiment. In contrast, JFB enables us to train these networks and approximate solutions for all three high-dimensional problems, as seen in Figure~\ref{fig:jfb_high_dimensional_problems} while also maintaining \emph{constant} memory per epoch. Here, the 100 quadrotor problem is 1200-dimensional in the state and 400-dimensional in the controller. The 100 agent consumption problem is 100-dimensional in both state and controller. The 100 bikes are 400-dimensional in the state and 200-dimensional in the controller.

All high-dimensional experiments employed PyTorch’s \texttt{ReduceLROnPlateau} learning rate scheduler with a decay factor of $0.5$ and patience of $10$. Additional hyperparameters, such as batch size, were tuned individually for each experiment to achieve the lowest control objective.

\section{Discussion}
This work provides the first convergence theory for training implicit neural networks in optimal control problems with implicit Hamiltonians using a biased stochastic gradient method, namely Jacobian-Free Backpropagation (JFB). Our analysis shows that, despite the bias introduced by ignoring Jacobian inversion \emph{at each time step, and for each trajectory} in the gradient estimate, we can ensure convergence under standard smoothness, contractivity, and variance assumptions. Empirically, we demonstrate that key theoretical conditions, such as contractivity, conditioning of ${M}_\theta$, and descent, can be monitored during training, and that JFB scales effectively to high-dimensional control problems where AD and optimization-based methods are infeasible.

\textbf{Limitations}. The primary limitation of this work is its reliance on the contractivity of the operator $T_\theta$. Although we empirically observe this property to hold in our experiments, contractivity may fail for more sophisticated optimization schemes used to evaluate~\eqref{eq:training_problem3}, as well as in settings where the Hamiltonian $\mathcal{H}$ is only locally, rather than strongly, concave. Consequently, an important direction for future work is to extend the convergence analysis to non-contractive or averaged operators. In addition, relaxing the analysis to accommodate cases in which $u_\theta^\star$ corresponds to a local maximizer of $\mathcal{H}$ would further broaden the applicability of the framework.

\section{Conclusion}
\label{sec:conclusion}
We develop the first convergence guarantees for Jacobian-Free Backpropagation (JFB) as a biased stochastic gradient method in optimal control problems with implicit Hamiltonians. Our results show that JFB converges to stationary points despite the absence of exact gradient information and enable scalable learning in high-dimensional control settings. Together, the theory and experiments establish JFB as a principled and practical approach for learning value function–based feedback controllers when closed-form Hamiltonians are unavailable. Future work will extend this framework to mean-field control/games settings~\cite{lasry2007mean, vidal2025kernel, agrawal2022random, lauriere2022learning, wang2025primal, chow2022numerical, yang2023relative, yang2026pathwise}, where the scalability of our approach may prove useful. Code for this work will be provided upon publication.

\section{Acknowledgements}
SWF and EG were partially funded by NSF Award 2309810. SO was partially funded by DARPA under grant HR00112590074, NSF under grant 2208272 and 1554564,
AFOSR under MURI grant N00014-20-1-278, and by ARO
under grant W911NF-24-1-015. SWF and EG were partially funded by NSF Award
2309810. SO was partially funded by DARPA under grant
HR00112590074, NSF under grant 2208272 and 1554564,
AFOSR under MURI grant N00014-20-1-278, and by ARO
under grant W911NF-24-1-015.

\bibliographystyle{icml2026}
\bibliography{references}

\newpage
\appendix
\onecolumn
\section{Proof of Main Result}
\label{appendix:main_draft_proof}
In this section, we provide proofs to all the Lemmas and Theorems. For ease of presentation, we re-state the Lemmas and Theorems before proving them.

\subsection{Proof of Lemma ~\ref{lemma:expectation_integrand}}
\textit{Lemma ~\ref{lemma:expectation_integrand}}:
\lemmaExpectationIntegrand{app}
\begin{proof}
\textit{Outline of Proof:} The proof is carried out in four main steps.
\begin{itemize}
    \item[] \textbf{Step 1.} Fixing $x$, reformulate $\langle v_{\theta,x}, w_{\theta,x}\rangle$ in terms of $M_{\theta} v_{\theta,x}$.
    \item[] \textbf{Step 2.} Use this new formulation and the given assumptions to derive the single-sample (no expectation with respect to $x$) form of desired inequality.
    \item[] \textbf{Step 3.} Take expectation with respect to $x$ on both sides of the result of Step 2 and apply Jensen's inequality.
    \item[] \textbf{Step 4.} Using Assumption ~\ref{assumption:expectation_integrand_inner_product}, algebraically rearrange the result of Step 3 to obtain the desired result.
\end{itemize}
\textit{Step 1}: 
Let $\psi \coloneqq M_{\theta}v_{\theta,x}$. Substituting the definition of $v_{\theta,x}$:
\begin{small}
\begin{equation}
\psi = M_{\theta}(M_{\theta}^\top \Jcal_{\theta}\invT h_{\theta, x}) = (M_\theta M_\theta^\top)\Jcal_{\theta}\invT h_{\theta, x}.
\end{equation}
\end{small}
By Assumption ~\ref{assumption:M}, $M_{\theta}M_{\theta}^{\top}$ is nonsingular. The matrix $\mathcal{J}_\theta$, and therefore $\mathcal{J}_\theta^{-\top}$, is assumed to be nonsingular. 

Our goal is to express $\inner{v_{\theta,x}}{w_{\theta,x}}$ in terms of $\psi$. Given $\psi = (M_\theta M_\theta^\top)\Jcal_\theta\invT h_{\theta, x}$, we have
\begin{small}
\begin{equation}
    h_{\theta, x} = \Jcal_{\theta}\T(M_\theta M_\theta^\top)^{-1}\psi.
\end{equation}
\end{small}
Substituting this back into the definitions, we obtain
\begin{small}
\begin{align*}
v_{\theta,x} &= M_{\theta}^\top \Jcal_{\theta}\invT[\Jcal_{\theta}\T(M_\theta M_\theta^\top)^{-1}\psi] = M_\theta\T(M_\theta M_\theta^\top)^{-1}\psi \\
w_{\theta,x} &= M\T[\Jcal_{\theta}\T(M_\theta M_\theta^\top)^{-1}\psi] = M\T\Jcal_\theta\T(M_\theta M_\theta^\top)^{-1}\psi
\end{align*}
\end{small}
Therefore, $\inner{v_{\theta,x}}{w_{\theta,x}} = \inner{\psi}{\Jcal_\theta \T(M_\theta M_\theta^\top)^{-1}\psi}$, using the definition of adjoint.

\textit{Step 2}: Let $A = (M_\theta M_\theta^\top)^{-1}$. Since $M_\theta M_\theta^\top$ is symmetric positive definite, so is $A$. Let $\lambda_{+}$ and $\lambda_{-}$ denote the largest and smallest eigenvalues of $A$, and define $\bar{\lambda} = \frac{1}{2}(\lambda_{+} + \lambda_{-})$.

Using the assumptions of this Lemma, we have by Lemma A-1 in the original JFB paper (Fung et. al. 2022) ,  $\Jcal_\theta\T$ is coercive , that is,  $\inner{\psi}{\Jcal_\theta\T\psi} \geq (1-\gamma)\norm{\psi}^2$. Using this and Cauchy-Schwarz,
\begin{small}
\begin{align*}
\inner{\psi}{\Jcal_\theta\T A\psi} &= \inner{\psi}{\Jcal_\theta\T(\bar{\lambda}I + A - \bar{\lambda}I)\psi} \\ &=\bar{\lambda}\inner{\psi}{\Jcal_\theta\T\psi} + \inner{\psi}{\Jcal_\theta\T(A - \bar{\lambda}I)\psi} \\
&\geq \bar{\lambda}(1-\gamma)\norm{\psi}^2 - \norm{\Jcal_\theta\T}\norm{A - \bar{\lambda}I}\norm{\psi}^2.
\end{align*}
\end{small}
Using Lemmas A-1 and A-2 from Wu Fung et. al. 2022, we have $\norm{\Jcal_\theta\T} \leq 1+\gamma$ and $\norm{A - \bar{\lambda}I} = \frac{1}{2}(\lambda_{+} - \lambda_{-})$. Substituting these gives,
\begin{small}
\begin{align*}
\inner{v_{\theta,x}}{w_{\theta,x}} \geq \left[ \frac{\lambda_{+} + \lambda_{-}}{2}(1-\gamma) - (1+\gamma)\frac{\lambda_{+} - \lambda_{-}}{2} \right] \norm{\psi}^2 .
\end{align*}
\end{small}
By Assumption ~\ref{assumption:M} the condition number $\kappa(A) < \frac{1}{\gamma} \implies (\lambda_{-} - \gamma\lambda_{+}) > 0$. Thus, we have
$\inner{v_{\theta,x}}{w_{\theta,x}} \geq \norm{\psi}^2(\lambda_{-} - \gamma\lambda_{+}) \geq 0.$

\textit{Step 3}: Taking the expectation with respect to $x$ on both sides of the result of Step 2 and using $\psi=M_{\theta}v_{\theta,x}$ we have
\begin{align*}
\mathbb{E}_x[\inner{v_{\theta,x}}{w_{\theta,x}}] &\geq \mathbb{E}_x[(\lambda_{-} - \gamma\lambda_{+})\norm{M_{\theta}v_{\theta,x}}^2] \\
\mathbb{E}_x[\inner{v_{\theta,x}}{w_{\theta,x}}] &\geq(\lambda_{-} - \gamma\lambda_{+}) \mathbb{E}_x[\norm{M_{\theta}v_{\theta,x}}^2] \\
\mathbb{E}_x[\inner{v_{\theta,x}}{w_{\theta,x}}] &\geq (\lambda_{-} - \gamma\lambda_{+})\norm{\mathbb{E}_x[M_{\theta}v_{\theta,x}]}^2 
\end{align*}
where the last line above is due to Jensen's inequality.

\textit{Step 4}: Let $E_v = \mathbb{E}_{x}[v_{\theta,x}], E_w = \mathbb{E}_x[w_{\theta,x}]$. Rewriting $\mathbb{E}_x[\inner{v_{\theta,x}}{w_{\theta,x}}]$,
\begin{align*}
\mathbb{E}_x[\inner{v_{\theta,x}}{w_{\theta,x}}] &= \mathbb{E}_x[\inner{v_{\theta,x}-E_v + E_v}{w_{\theta,x}-E_w+E_w}] \\
&= \mathbb{E}_x[\inner{v_{\theta,x}-E_v}{w_{\theta,x}-E_w}] + \inner{\mathbb{E}_x[v_{\theta,x}-E_v]}{E_w} \\ 
&+ \inner{E_v}{\mathbb{E}_x[w_{\theta,x}-E_w]} + \inner{E_v}{E_w} \\
&= \mathbb{E}_x[\inner{v_{\theta,x}-E_v}{w_{\theta,x}-E_w}] + \inner{0}{E_w} + \inner{E_w}{0} + \inner{E_v}{E_w} \\
&= \mathbb{E}_x[\inner{v_{\theta,x}-E_v}{w_{\theta,x}-E_w}] + \inner{E_v}{E_w}
\end{align*}
By the Cauchy-Schwarz inequality, the last line above becomes
\begin{align*}
\mathbb{E}_x[\inner{v_{\theta,x}}{w_{\theta,x}}] 
&\leq \sqrt{\mathbb{E}_x[\|v_{\theta,x}-E_v\|^2]}\sqrt{\mathbb{E}_x[\|w_{\theta,x}-E_w\|^2]} + \inner{E_v}{E_w} \\ 
&\leq \sqrt{\text{Var}_x[v_{\theta,x}]}\sqrt{\text{Var}_x[w_{\theta,x}]} + \inner{E_v}{E_w} \\ 
&\leq \text{max}\left(\sqrt{\text{Var}_x[v_{\theta,x}]},\sqrt{\text{Var}_x[w_{\theta,x}]}\right)^2 + \inner{E_v}{E_w} \\ 
&\leq \delta_{var}\|\mathbb{E}_x[M_{\theta}v_{\theta,x}]\|^2 + \inner{E_v}{E_w}
\end{align*}
Rearranging the last line above and applying the result of Step 3 yields
\begin{align}
\inner{E_v}{E_w} &\geq \mathbb{E}_x[\inner{v_{\theta,x}}{w_{\theta,x}}] - \delta_{var}\|\mathbb{E}_x[M_{\theta}v_{\theta,x}]\|^2 \\
&\geq (\lambda_{-}-\gamma\lambda_{+})\|\mathbb{E}_x[M_{\theta}v_{\theta,x}]\|^2 - \delta_{var}\|\mathbb{E}_x[M_{\theta}v_{\theta,x}]\|^2 \\
&\geq (\lambda_{-}-\gamma\lambda_{+} - \delta_{var})\|\mathbb{E}_x[M_{\theta}v_{\theta,x}]\|^2 \\
&= \delta_{\theta}^2 \geq 0
\end{align}
\end{proof}

\subsection{Proof of Lemma~\ref{lemma:expectation_integral_product}}
\textit{Lemma~\ref{lemma:expectation_integral_product}}:
\lemmaExpectationIntegralProduct{app}
\begin{proof}
Let $E_1 = \mathbb{E}_x[\nabla_{\theta}J_x] \text{ and } E_2=\mathbb{E}_x[\dJFB] \text{ so } E_{1}^{\top}E_2 = T^2 \mathbb{E}_x[C_{v}]^{\top}\mathbb{E}_x[C_{w}]$. We have that 
\begin{align}
&\int_{0}^{T}\mathbb{E}_x[v_{\theta,x}(t)]^{\top}\mathbb{E}[w_{\theta,x}(t)]dt \\ &=\int_{0}^{T}\mathbb{E}_x[v_{\theta,x} - C_v + C_v]^{\top}\mathbb{E}_x[w_{\theta,x}(t) - C_w + C_w]dt \\
&= \int_{0}^{T}\mathbb{E}_x[v_{\theta,x}(t) - C_v]^{\top}\mathbb{E}_x[w_{\theta,x}(t) - C_w]dt \\
&+\int_{0}^{T} \mathbb{E}_x[v_{\theta,x}(t) - C_v]^{\top}\mathbb{E}_x[C_w]dt \\
&+\int_{0}^{T}\mathbb{E}_x[C_v]^{\top}\mathbb{E}_x[w_{\theta,x}(t) - C_w] + T\mathbb{E}_x[C_v]^{\top}\mathbb{E}_x[C_w] 
\end{align}
Because $C_w$ does not depend on $t$, we can factor it out of the integral
\begin{equation}
\int_{0}^{T}\mathbb{E}_x[v_{\theta,x}(t) - C_v]^{\top}\mathbb{E}_x[C_w]dt = \left(\int_{0}^{T}\mathbb{E}_x[v_{\theta,x}(t)-C_v]dt\right)^{\top}\mathbb{E}_x[C_w]
\end{equation}
Because $v_{\theta,x}$ is integrable on $[0,T] \times \Omega$ Fubini's Theorem applies and the order of integration can be interchanged, yielding
\begin{align}
\left(\int_{0}^{T}\mathbb{E}_x[v_{\theta,x}(t)-C_v]dt\right)^{\top}\mathbb{E}_x[C_w] &= \left(\mathbb{E}_x\left[\int_{0}^{T}v_{\theta,x}(t)-C_vdt\right]\right)^{\top}\mathbb{E}_x[C_w] \\
&=\left( \mathbb{E}_x[0]\right)^{\top}\mathbb{E}_x[C_w] \\
&= 0,
\end{align}
where we used $\int_0^T (v_{\theta,x}(t) - C_v) dt = 0$. A similar derivation yields
\begin{equation}
\int_{0}^{T} \mathbb{E}_x[w_{\theta,x}(t) - C_w]^{\top}\mathbb{E}_x[C_v]dt = 0
\end{equation}
Therefore, we have
\begin{align}
\int_{0}^{T}\mathbb{E}_x[v_{\theta,x}(t)]^{\top}\mathbb{E}[w_{\theta,x}(t)]dt &= \int_{0}^{T}\mathbb{E}_x[v_{\theta,x}(t) - C_v]^{\top}\mathbb{E}_x[w_{\theta,x}(t) - C_w]dt + T\mathbb{E}_x[C_v]^{\top}\mathbb{E}_x[C_w] \\
\int_{0}^{T}\mathbb{E}_x[v_{\theta,x}(t)]^{\top}\mathbb{E}[w_{\theta,x}(t)]dt &= \int_{0}^{T}\mathbb{E}_x[v_{\theta,x}(t) - C_v]^{\top}\mathbb{E}_x[w_{\theta,x}(t) - C_w]dt + \frac{1}{T}E_{1}^{\top}E_2
\label{eq:eip}
\end{align}
By Cauchy-Schwarz and Assumption~\ref{assumption:expectation_timeaverage}, $\forall t$
\begin{align}
&\mathbb{E}_x[v_{\theta,x}(t) - C_v]^{\top}\mathbb{E}_x[w_{\theta,x}(t) - C_w] \leq \| \mathbb{E}_x[v_{\theta,x}-C_v]\|\|\mathbb{E}_x[w_{\theta,x}-C_w]\| \\
&\leq (a_v +\delta_v \left\|\mathbb{E}_x[\nabla_{\theta}J_x] \right\|) (a_w + \delta_w \left\| \mathbb{E}_x[\dJFB] \right\|) \\
&\leq \max(a_v + \delta_v \left\| \mathbb{E}_x[\nabla_{\theta}J_x ]\right\|, a_w +\delta_w \left\| \mathbb{E}_x[\dJFB] \right\|)^2 \\
&\leq \delta_{\theta}^{2} - \frac{\epsilon_v}{T^2}  \left\|\mathbb{E}_x[\nabla_{\theta}J_x] \right\|^2
\label{eq:delta_theta}
\end{align}
Applying ~\eqref{eq:delta_theta} and algebraically rearranging ~\eqref{eq:eip} yields
\begin{align}
&\int_{0}^{T}\mathbb{E}_x[v_{\theta,x}(t)]^{\top}\mathbb{E}_x[w_{\theta,x}(t)]dt \leq \int_{0}^{T}\delta_{\theta}^{2} - \frac{\epsilon_v}{T^2} \left\| \nabla_{\theta}J_x \right\|^2 dt + \frac{1}{T}E_{1}^{\top}E_2 \\
&E_{1}^{\top}E_2 \geq T\int_{0}^{T}\mathbb{E}_x[v_{\theta,x}(t)]^{\top}\mathbb{E}_x[w_{\theta,x}(t)] -\left( \delta_{\theta}^{2} - \frac{\epsilon_v}{T^2}\|\mathbb{E}_x[\nabla_{\theta}J_x]\|^2 \right)dt 
\end{align}
Thus, by the result of Lemma ~\ref{lemma:expectation_integrand}
\begin{align}
E_{1}^{\top}E_2 &\geq T\int_{0}^{T}\delta_{\theta}^2 -\left( \delta_{\theta}^{2} - \frac{\epsilon_v}{T^2}\|\mathbb{E}_x[\nabla_{\theta}J_x]\|^2 \right)dt \\
E_{1}^{\top}E_2 &\geq T\int_{0}^{T}\frac{\epsilon_v}{T^2}\|\mathbb{E}_x[\nabla_{\theta}J_x]\|^2dt \\
E_{1}^{\top}E_2 &\geq T\left(\frac{\epsilon_v}{T}\|\mathbb{E}_x[\nabla_{\theta}J_x]\|^2\right) \\
E_1^{\top}E_2 &\geq \epsilon_v\|\mathbb{E}_x[\nabla_{\theta}J_x]\|^2
\end{align}
\end{proof}

\subsection{Proof of Lemma~\ref{lemma:expectation_descent}}
\textit{Lemma~\ref{lemma:expectation_descent}}:
\lemmaExpectationDescent{app}
\begin{proof}
Let $F(\theta_j) = \mathbb{E}_x[J_x(\theta_j)]$ and $\nabla_{\theta}F(\theta_j) = \nabla_{\theta}\mathbb{E}_x[J_x(\theta)] = \mathbb{E}_x[\nabla_{\theta}J(\theta_j)]$ since $\mathbb{E}_x[\cdot]$ is an integral with respect to x, not $\theta$. Taking the 2nd order Taylor expansion of $F$ with respect to $\theta$ centered at $\theta_j$, using the fact that $J$, and therefore $F$, is $L_J$-Lipschitz
\begin{equation*}
F(\theta_{j+1}) \leq F(\theta_j) + \nabla_{\theta}F(\theta_j)^{\top}(\theta_{j+1}-\theta_j) + \frac{1}{2}L_{J}\|\theta_{j+1}-\theta_j\|^2
\end{equation*}
Using ~\eqref{eq:SGD_iteration}, we have
\begin{equation*}
F(\theta_{j+1}) -F(\theta_j) \leq -\alpha_j\nabla_{\theta}F(\theta_j)^{\top}\dJFBxi(\theta_j) + \frac{\alpha_j}{2}L_{J}\|\dJFBxi(\theta_j)\|^2.
\label{eq:d1}
\end{equation*}
Taking the conditional expectation with respect to $\xi_j$ on both sides of ~\eqref{eq:d1}, we get
\begin{equation}
\begin{aligned}
& \mathbb{E}_{\xi_j}[F(\theta_{j+1})] -F(\theta_j) \\ &\leq 
\mathbb{E}_{\xi_j}[-\alpha_j\nabla_{\theta}F(\theta_j)^{\top}\dJFBxi(\theta_j)] + \mathbb{E}_{\xi_j}[\frac{1}{2}\alpha_j^2L_{J}\|\dJFBxi(\theta_j)\|^2]
\\
&= -\alpha_j\nabla_{\theta}F(\theta_j)^{\top}\mathbb{E}_{\xi_j}[\dJFBxi(\theta_j)] + \mathbb{E}_{\xi_j}[\frac{1}{2}\alpha_j^2L_{J}\|\dJFBxi(\theta_j)\|^2].
\end{aligned}
\label{eq:d2}
\end{equation}
Note that, $\theta_j$ depends only on $\xi_{j-1},...,\xi_1,\xi_0$ generated in previous iterates and $\mathbb{E}_{\xi_j}[F(\theta_{j})] = F(\theta_{j})$. 
By Cauchy-Schwarz inequality,
\[
\left\|\int_{0}^{T}M_{\theta_j}^{\top}h_{\theta, x} dt\right\|^2 \leq \left(\int_0^T 1^2 dt\right) \left(\int_0^T \|M_{\theta_j}^{\top}h_{\theta, x}\|^2 dt\right) = T \int_0^T \|M_{\theta_j}^{\top}h_{\theta, x}\|^2 dt \leq T \int_0^T \|M_{\theta_j}^{\top}\|^2\|h_{\theta, x}\|^2 dt.
\]
Thus, by Assumptions ~\ref{assumption:h}-~\ref{assumption:M},
\begin{align}
\mathbb{E}_{\xi_j}[\|d^\mathrm{JFB}_{\xi_j}(\theta_j)\|^2] = \mathbb{E}\left[\left\|\frac{1}{B} \sum_{b=1}^B \dJFBb(\theta_j)\right\|^2\right] \leq
\mathbb{E}_x[\|\dJFB(\theta_j)\|^2] 
&\leq \frac{B_{max}^2T^2}{\beta},
\label{eq:ub_d_xi_j}
\end{align}
where we remind the reader that $d^\mathrm{JFB}_{\xi_j}(\theta_j)$ may correspond to the JFB gradient computed on a minibatch of samples and $\dJFB(\theta_j)$ corresponds to the JFB gradient computed on a single sample.

Thus, by the result of Lemma ~\ref{lemma:expectation_integral_product}, ~\eqref{eq:d2}, ~\eqref{eq:ub_d_xi_j}, and the fact that $\mathbb{E}_{\xi_j}[\dJFBxi(\theta_j)] = \mathbb{E}_{x}[\dJFB(\theta_j)]$, it holds that
\begin{equation}
\mathbb{E}_{\xi_j}[\mathbb{E}_x[J_x(\theta_{j+1})]] -\mathbb{E}_x[J_x(\theta_j)] \leq 
-\alpha_j\epsilon_v\|\mathbb{E}_x[\nabla_{\theta}J_x(\theta_j)]\|^2 + \frac{\alpha_{j}^2L_JB_{max}^2T^2}{2\beta}
\label{eq:d4}
\end{equation}
For the RHS of ~\eqref{eq:d4} to be $\leq 0$, it must be true that, with $\alpha_j > 0$
\begin{align}
 \frac{\alpha_{j}^2L_JB_{max}^2T^2}{2\beta} &\leq \alpha_j\epsilon_v\|\mathbb{E}_x[\nabla_{\theta}J_x(\theta_j)]\|^2 \\
 \alpha_j &\leq \frac{2\beta\epsilon_v}{L_JB_{max}^2T^2}\|\mathbb{E}_x[\nabla_{\theta}J_x(\theta_j)]\|^2
\end{align}
The following upper bound, uniform in $j$, for $\|\mathbb{E}_x[\nabla_{\theta}J_x(\theta_j)]\|^2$ can be derived in a very similar manner to that of $\mathbb{E}_x[\|\dJFB(\theta_j)\|^2]$ above,
\begin{equation}
\|\mathbb{E}_x[\nabla_{\theta}J_x(\theta_j)]\|^2 \leq \frac{B_{max}^2T^2}{\beta(1-\gamma)^2}
\end{equation}
Thus, for the RHS of \eqref{eq:d4} to be $\leq 0$, it is necessary that
\begin{align}
\alpha_j \leq \frac{2\epsilon_v}{L_J(1-\gamma)^2}
\end{align}
\end{proof}

\subsection{Proof of Theorem~\ref{theorem:expectation_cesaro_convergence}}
\textit{Theorem~\ref{theorem:expectation_cesaro_convergence}}:
\theoremExpectationCesaroConvergence{app}
\begin{proof}
Taking the total expectation of ~\eqref{eq:d4}, we have
\begin{equation}
\mathbb{E}[\mathbb{E}_x[J_x(\theta_{j+1})]] - \mathbb{E}[\mathbb{E}_x[J_x(\theta_j)]] \leq \\
-\alpha_j\epsilon_v\mathbb{E}[\|\mathbb{E}_{x}[\nabla_{\theta}J_x(\theta_j)]\|^2] + \frac{\alpha^2L_JB_{max}^2T^2}{2\beta}
\label{eq:conv1}
\end{equation}
Setting $j=0$ in ~\eqref{eq:conv1}, we have
\begin{align}
&\mathbb{E}[\mathbb{E}_x[J_x(\theta_{1})]] \leq \mathbb{E}[\mathbb{E}_x[J_x(\theta_0)]]
-\alpha_j\epsilon_v\mathbb{E}[\|\mathbb{E}_{x}[\nabla_{\theta}J_x(\theta_0)]\|^2] + \frac{\alpha_j^2L_JB_{max}^2T^2}{2\beta}
\label{eq:conv2}
\end{align}
Setting $j=1$ in ~\eqref{eq:conv1} and applying ~\eqref{eq:conv2} 
\begin{align*}
&\mathbb{E}[\mathbb{E}_x[J_x(\theta_{2})]] \leq \mathbb{E}[\mathbb{E}_x[J_x(\theta_1)]]
-\alpha_j\epsilon_v\mathbb{E}[\|\mathbb{E}_{x}[\nabla_{\theta}J_x(\theta_1)]\|^2] + \frac{\alpha_j^2L_JB_{max}^2T^2}{2\beta}\\
&\mathbb{E}[\mathbb{E}_x[J_x(\theta_{2})]] \leq \mathbb{E}[\mathbb{E}_x[J_x(\theta_0)]]
-\epsilon_v\sum_{j=0}^{1}\alpha_j\mathbb{E}[\|\mathbb{E}_{x}[\nabla_{\theta}J_x(\theta_j)]\|^2] + \frac{L_JB_{max}^2T^2}{2\beta}\sum_{j=0}^{1}\alpha_{j}^2\\
&\mathbb{E}[\mathbb{E}_x[J_x(\theta_{2})]] - \mathbb{E}[\mathbb{E}_x[J_x(\theta_0)]]\leq 
-\epsilon_v\sum_{j=0}^{1}\alpha_j\mathbb{E}[\|\mathbb{E}_{x}[\nabla_{\theta}J_x(\theta_j)]\|^2] + \frac{L_JB_{max}^2T^2}{2\beta}\sum_{j=0}^{1}\alpha_{j}^2 \\
&\vdots  \\
&\mathbb{E}[\mathbb{E}_x[J_x(\theta_{K})]] - \mathbb{E}[\mathbb{E}_x[J_x(\theta_0)]]\leq 
-\epsilon_v\sum_{j=0}^{K-1}\alpha_j\mathbb{E}[\|\mathbb{E}_{x}[\nabla_{\theta}J_x(\theta_j)]\|^2] + \frac{L_JB_{max}^2T^2}{2\beta}\sum_{j=0}^{K-1}\alpha_{j}^2
\end{align*}
Since $J_x$ is bounded from below by $J_{inf}$ by Assumption ~\ref{assumption:T}, algebraically rearranging the last line above yields
\begin{align}
&\sum_{j=0}^{K-1}\alpha_j\mathbb{E}[\|\mathbb{E}_{x}[\nabla_{\theta}J_x(\theta_j)]\|^2] \leq 
\frac{\mathbb{E}[\mathbb{E}_x[J_x(\theta_0)]] - J_{inf}}{\epsilon_v}+ \frac{L_JB_{max}^2T^2}{2\beta\epsilon_v}\sum_{j=0}^{K-1}\alpha_{j}^2\\
&\frac{1}{A_K}\sum_{j=0}^{K-1}\alpha_j\mathbb{E}[\|\mathbb{E}_{x}[\nabla_{\theta}J_x(\theta_j)]\|^2] \leq \frac{\mathbb{E}[\mathbb{E}_x[J_x(\theta_0)]] - J_{inf}}{\epsilon_vA_K}+ \frac{L_JB_{max}^2T}{2\beta\epsilon_vA_K}\sum_{j=0}^{K-1}\alpha_{j}^2
\label{eq:conv3}
\end{align}
Using linearity of expectation, ~\eqref{eq:conv3} becomes
\begin{equation}
\mathbb{E}\left[\frac{1}{A_K}\sum_{j=0}^{K-1}\alpha_j\|\mathbb{E}_{x}[\nabla_{\theta}J_x(\theta_j)]\|^2\right] \leq \frac{\mathbb{E}[\mathbb{E}_x[J_x(\theta_0)]] - J_{inf}}{\epsilon_vA_K}+ \frac{L_JB_{max}^2T^2}{2\beta\epsilon_vA_K}\sum_{j=0}^{K-1}\alpha_{j}^2
\label{eq:conv4}
\end{equation}
Thus, since $\lim_{K \rightarrow \infty} A_K = \infty$, we have
\begin{align*}
&\lim_{K \rightarrow \infty}\mathbb{E}\left[\frac{1}{A_K}\sum_{j=0}^{K}\alpha_j\|\mathbb{E}_x[\nabla_{\theta}J_x(\theta_j)]\|^2\right] \\
&\leq \lim_{K \rightarrow \infty}\left[\frac{2\beta(\mathbb{E}[\mathbb{E}_x[J_x(\theta_0)]]-J_{inf}) + L_{J}B_{max}^2T^2L{\sum_{j=0}^{K-1}\alpha_{j}^2}}{2 \beta \epsilon_vA_K}\right] \\
&= 0
\end{align*}
\end{proof}

\subsection{Proof of Theorem~\ref{theorem:expectation_liminf}}
\textit{Theorem~\ref{theorem:expectation_liminf}}:
\theoremExpectationLiminf{app}
\begin{proof}
Suppose, for contradiction that, for some $a > 0$
\begin{equation}
\liminf_{j \rightarrow \infty} \mathbb{E}\left[\left\| \mathbb{E}_{x}[\nabla_{\theta}J_{x}(\theta_j)]\right\|^2 \right] = a
\end{equation}
Let $K \in \mathbb{N}$ and $A_K = \sum_{j=0}^{K-1}\alpha_j$. Then, using linearity of expectation
\begin{equation*}
\mathbb{E}\left[ \frac{1}{A_K}\sum_{j=0}^{K-1}\alpha_j \left\| \mathbb{E}_{x}[\nabla_{\theta}J_{x}(\theta_j)] \right\|^2 \right] = \frac{1}{A_K}\sum_{j=0}^{K-1}\alpha_j\mathbb{E}\left[ \left\| \mathbb{E}_{x}[\nabla_{\theta}J_{x}(\theta_j)] \right\|^2\right]
\end{equation*}
Taking the liminf as $K \rightarrow \infty$ on both sides of the above equation yields a contradiction, as the LHS converges to 0 but the RHS diverges. Hence, by contradiction the result follows. 
\end{proof}

\subsection{Proof of Corollary~\ref{corollary:convergence_probability}}
\textit{Corollary~\ref{corollary:convergence_probability}}:
\corollaryConvergenceProbability{app}
\begin{proof}
This proof is the same as that of Theorem 4.11 in in \cite{bottou2018optimization} but we will include it here to be complete. Let $\epsilon > 0$ and let $\mathbb{E}[\cdot]$ represent total expectation. By Markov's inequality and the law of total expectation, also known as the tower property,
\begin{align}
P(\|\mathbb{E}_x[\nabla_{\theta}J_x(\theta_{j(K)})]\| &\geq \epsilon) = P(\|\mathbb{E}_x[\nabla_{\theta}J_x(\theta_{j(K)})]\|^2 \geq \epsilon^2) \\
&\leq \frac{1}{\epsilon^2}\mathbb{E}[\mathbb{E}_{j(K)}[\|\mathbb{E}_x[\nabla_{\theta}J_x(\theta_{j(K)})]\|^2]]
\label{eq:convergence_in_prob}
\end{align}
By the proof of Theorem ~\ref{theorem:expectation_cesaro_convergence} we have $\lim_{K \rightarrow \infty}\mathbb{E} \left[\sum_{j=0}^{K-1}\alpha_j \|\mathbb{E}_x[\nabla_{\theta}J_x(\theta_j)]\|^2 \right] < \infty$. Therefore, we must have $\lim_{j \rightarrow \infty}\mathbb{E}\left[\alpha_j\|\mathbb{E}_x[\nabla_{\theta}J_x(\theta_j)]\|^2\right] = 0$. Thus, by ~\eqref{eq:convergence_in_prob},
\begin{equation}
\lim_{K \rightarrow \infty} P(\|\mathbb{E}_x[\nabla_{\theta}J_x(\theta_{j(K)})]\| \geq \epsilon) \leq \lim_{K \rightarrow \infty } \frac{1}{\epsilon^2}\mathbb{E}[\mathbb{E}_{j(K)}[\|\mathbb{E}_x[\nabla_{\theta}J_x(\theta_{j(K)})]\|^2]] = 0
\end{equation}
Since the choice of $\epsilon > 0$ was arbitrary, this holds $\forall \epsilon > 0$, proving convergence in probability.
\end{proof}

\section{Problem Formulation for Optimal Consumption Savings}
\label{app:consumption_description}
We consider the consumption optimization of multiple investors. This is adapted from \cite{detemple1992optimal, angoshtari2023optimal}. The wealth $x_t$, $t \in [0,T]$ of an individual follows
    \begin{equation*}
        \frac{d{x}_t}{dt} = r x_t - \mathbf{1}^\top u_t,
    \end{equation*}
with given initial condition $x_0$. Here $r$ is the risk-free rate, the individual choose a consumption strategy $u_t \in \mathbb{R}^m$ for $m$ products, while tracking a habit level $h_t \in \mathbb{R}^m$, with given initial condition $h_0$,
      \begin{equation*}
        \frac{d{h}(t)}{dt} = A u^{\circ \eta}(t) - B h^{\circ \theta}(t),
    \end{equation*}
    where $\circ$ denotes component-wise power. To simplify the setup, we take $B$ to be diagonal with entries $B_i \geq 0$.
    An individual maximizes a CRRA-type discounted utility
\begin{equation*}
    \int_0^T e^{-\delta t} \frac{\sum_{i=1}^n \left(u_i(t) - h_i(t)\right)^{1-\gamma}}{1 - \gamma} dt + e^{-\delta T} \epsilon \frac{x_T^{1-\gamma}}{1-\gamma},
\end{equation*}
where $u_i(t) > h_i(t)$ for $i = 1, \ldots, m$, and $\gamma \neq 1$, $\epsilon$, $\delta$, $\eta$, $\theta$ are positive constants. We use a reparametrization to ensure the constraint $u_i(t) - h_i(t) > 0$ for all $i$ in experiments. The consumption behavior exhibits habit formation, that is, the current utility depends on the consumption relative to an individual's habit level established from past consumption. Then, the Hamiltonian is
\begin{equation*}
    \mathcal{H}(t, x, h, u, p) = 
    e^{-\delta t} \sum_{i=1}^m \frac{(u_i - h_i)^{1-\gamma}}{1 - \gamma} + p_x (r x - \mathbf{1}^\top u) + p_h^\top (A u^{\circ \eta} - B h^{\circ \theta}).
\end{equation*}
Let $\frac{\partial \mathcal{H}}{\partial u} = 0$, then $e^{-\delta t} (u - h)^{-\gamma} - p_x + \alpha \eta p_h u^{\eta-1} = 0$. Except when $\eta = \theta =1$, this first-order condition does not have a closed-form solution in general.

\section{Proof of Convergence of SGD with JFB as Stochastic Gradient to a Neighborhood of a Critical Point Under Weaker Assumptions}
\label{app:convergence_to_nbhd}
It is possible to prove convergence in expectation of JFB-based SGD using the iteration ~\eqref{eq:SGD_iteration} to a neighborhood of a critical point of $J$ under weaker assumptions than Theorem ~\ref{theorem:expectation_cesaro_convergence} depends on. This section was inspired by \cite{demidovich2024guide}.

\begin{assumption}
    \label{assumption:app_weak_bound_on_integrand_var}
    Modify Assumption ~\ref{assumption:expectation_integrand_inner_product} to be $\exists 0 < \delta_{var} < \lambda_{-} - \gamma \lambda_{+}$ and $\epsilon_1 \geq 0$ such that $\forall z, u, \theta$
    \begin{equation*}
    \text{max}(\text{Var}_x[v_{\theta,x}(t)],\text{Var}_x[w_{\theta,x}(t)])^2 \leq \epsilon_1 + \delta_{var}\|\mathbb{E}_x[M_{\theta}v_{\theta,x}]\|^2
    \end{equation*}
\end{assumption}

\begin{lemma}
Under the Assumptions of ~\ref{assumption:T}-~\ref{assumption:M} and Assumption ~\ref{assumption:app_weak_bound_on_integrand_var}, $\forall z, u, \theta$
\begin{equation}
\langle \mathbb{E}_x[v_{\theta,x}(t)],\mathbb{E}_x[w_{\theta,x}(t)] \rangle \geq \delta_{\theta}^2 - \epsilon_1
\end{equation}
where $\delta_{\theta}$ is defined in Assumption ~\ref{assumption:expectation_timeaverage} and $\epsilon_1$ is defined in Assumption ~\ref{assumption:app_weak_bound_on_integrand_var}
\label{lemma:expectation_integrand_weak}
\end{lemma}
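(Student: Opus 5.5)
The plan is to reuse the proof of Lemma~\ref{lemma:expectation_integrand} verbatim through Steps 1--3. Only Step 4, where the variance assumption is invoked, needs to change.

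Steps 1--3 rely only on Assumptions~\ref{assumption:T}--\ref{assumption:M}: the contractivity of $T_\theta$ gives the coercivity of $\Jcal_\theta\T$, and Assumption~\ref{assumption:M} gives the eigenvalue and conditioning bounds. These steps therefore go through unchanged. Fixing $x$ and setting $\psi = M_\theta v_{\theta,x}$, we obtain the sample-wise bound $\inner{v_{\theta,x}}{w_{\theta,x}} \geq (\lambda_- - \gamma\lambda_+)\norm{\psi}^2$. Taking expectation in $x$ and applying Jensen's inequality then gives
\begin{equation*}
\mathbb{E}_x[\inner{v_{\theta,x}}{w_{\theta,x}}] \geq (\lambda_- - \gamma\lambda_+)\norm{\mathbb{E}_x[M_\theta v_{\theta,x}]}^2 .
\end{equation*}

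Step 4 proceeds as before. We decompose $\mathbb{E}_x[\inner{v_{\theta,x}}{w_{\theta,x}}] = \mathbb{E}_x[\inner{v_{\theta,x}-E_v}{w_{\theta,x}-E_w}] + \inner{E_v}{E_w}$, the cross terms vanishing. Cauchy--Schwarz applied to the covariance term bounds it by $\max(\sqrt{\mathrm{Var}_x[v_{\theta,x}]},\sqrt{\mathrm{Var}_x[w_{\theta,x}]})^2$. At this point Assumption~\ref{assumption:app_weak_bound_on_integrand_var} replaces Assumption~\ref{assumption:expectation_integrand_inner_product}, so the covariance term is bounded by $\epsilon_1 + \delta_{var}\norm{\mathbb{E}_x[M_\theta v_{\theta,x}]}^2$. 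Rearranging and inserting the Step 3 bound yields
\begin{equation*}
\inner{E_v}{E_w} \geq (\lambda_- - \gamma\lambda_+ - \delta_{var})\norm{\mathbb{E}_x[M_\theta v_{\theta,x}]}^2 - \epsilon_1 = \delta_\theta^2 - \epsilon_1,
\end{equation*}
using the definition of $\delta_\theta$ from Assumption~\ref{assumption:expectation_timeaverage}.

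The one delicate point, and the main obstacle, is the form of Assumption~\ref{assumption:app_weak_bound_on_integrand_var}. As written, it bounds $\max(\mathrm{Var}_x[v],\mathrm{Var}_x[w])^2$, i.e.\ the square of the variance, rather than $\max(\sqrt{\mathrm{Var}_x[v]},\sqrt{\mathrm{Var}_x[w]})^2 = \max(\mathrm{Var}_x[v],\mathrm{Var}_x[w])$. The intended reading, parallel to~\eqref{eq:variance_vw}, is clearly the latter, and I would adopt it.

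If the literal squared version must be used, I would argue as follows. Let $m = \max(\mathrm{Var}_x[v],\mathrm{Var}_x[w])$, so the assumption gives $m^2 \leq c$ with $c = \epsilon_1 + \delta_{var}\norm{\mathbb{E}_x[M_\theta v_{\theta,x}]}^2$. Then $m \leq \sqrt{c}$. This bound does not match the claimed constant unless one additionally uses $m \leq m^2$ when $m \geq 1$, or $m \leq 1 \leq$ an adjusted $\epsilon_1$ otherwise. I would therefore state the lemma under the unsquared interpretation, note that this is the analogue of Assumption~\ref{assumption:expectation_integrand_inner_product}, and otherwise treat the argument as a direct transcription of the earlier proof.
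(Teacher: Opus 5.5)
Your proposal is correct and follows the paper's proof: you reuse Steps 1--3 of Lemma~\ref{lemma:expectation_integrand} unchanged, then in Step 4 you substitute the weaker variance bound, which produces the extra $-\epsilon_1$. You are right about Assumption~\ref{assumption:app_weak_bound_on_integrand_var}: the paper's proof also applies it in the unsquared form $\max(\sqrt{\mathrm{Var}_x[v_{\theta,x}]},\sqrt{\mathrm{Var}_x[w_{\theta,x}]})^2$, exactly as you do, and your version additionally avoids the paper's slip of writing $\delta_{var}\norm{\mathbb{E}_x[v_{\theta,x}]}^2$ instead of $\delta_{var}\norm{\mathbb{E}_x[M_\theta v_{\theta,x}]}^2$ in the intermediate lines.
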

\begin{proof}
Let $E_v = \mathbb{E}_x[v_{\theta,x}(t)], \ E_w = \mathbb{E}_x[w_{\theta,x}(t)]$. With these assumptions and this notation, everything up to the line
\begin{equation}
\mathbb{E}_x[\langle v_{\theta,x},w_{\theta,x} \rangle] \leq \text{max}\left(\sqrt{\text{Var}_{x}[v_{\theta,x}]},\sqrt{\text{Var}_{x}[w_{\theta,x}]}\right)^2 + \langle E_v,E_w\rangle
\end{equation}
in the proof of Lemma ~\ref{lemma:expectation_integrand} still holds. Applying Assumption ~\ref{assumption:app_weak_bound_on_integrand_var} to this equation gives
\begin{align}
\mathbb{E}_x[\langle v_{\theta,x},w_{\theta,x} \rangle] &\leq \epsilon_1 + \delta_{var}\|\mathbb{E}_x[v_{\theta,x}]\|^2 + \langle E_v,E_w\rangle \\
\langle E_v,E_w\rangle &\geq \mathbb{E}_x[\inner{v_{\theta,x}}{w_{\theta,x}}] - \delta_{var}\|\mathbb{E}_x[v_{\theta,x}]\|^2 - \epsilon_1\\
\inner{E_v}{E_w} &\geq (\lambda_{-} - \gamma\lambda_{+})\|\mathbb{E}_x[M_{\theta}v_{\theta,x}]\|^2 - \delta_{var}\|\mathbb{E}_x[v_{\theta,x}]\|^2 - \epsilon_1 \\
\langle E_v,E_w\rangle &\geq (\lambda_{-} - \gamma \lambda_{+} - \delta_{var})\|\mathbb{E}_x[M_{\theta}v_{\theta,x}]\|^2 - \epsilon_1 \\ 
\langle E_v,E_w\rangle &\geq \delta_{\theta}^2 - \epsilon_1
\end{align}
\end{proof}

\begin{lemma}
Under Assumptions ~\ref{assumption:T}-~\ref{assumption:M}, ~\ref{assumption:app_weak_bound_on_integrand_var}, and ~\ref{assumption:expectation_timeaverage}, $\forall u,z,\theta$
\begin{equation*}
\mathbb{E}_x[\nabla_{\theta}J_x]^{\top}\mathbb{E}_x[\dJFB] \geq \epsilon_v\|\mathbb{E}_x[\nabla_{\theta}J_x]\|^2 - \epsilon_1'
\end{equation*}
where $\epsilon_v$ is given in Assumption ~\ref{assumption:expectation_timeaverage} and $\epsilon_1' = T^2\epsilon_1$ with $\epsilon_1$ is from Assumption ~\ref{assumption:app_weak_bound_on_integrand_var}
\label{lemma:weak_expectation_integral_product}
\end{lemma}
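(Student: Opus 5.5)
The plan is to repeat the proof of Lemma~\ref{lemma:expectation_integral_product} line by line. The only change is that the pointwise-in-time lower bound from Lemma~\ref{lemma:expectation_integrand} is replaced by the weaker bound of Lemma~\ref{lemma:expectation_integrand_weak}, and the extra $-\epsilon_1$ is tracked through the time integration. As before, set $E_1=\mathbb{E}_x[\nabla_\theta J_x]$ and $E_2=\mathbb{E}_x[\dJFB]$. Since $\nabla_\theta J_x=TC_v$ and $\dJFB=TC_w$, we have $E_1^\top E_2=T^2\,\mathbb{E}_x[C_v]^\top\mathbb{E}_x[C_w]$.

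\textbf{Step 1 (unchanged decomposition).} Write $v_{\theta,x}=(v_{\theta,x}-C_v)+C_v$ and similarly for $w_{\theta,x}$, and expand $\int_0^T \mathbb{E}_x[v_{\theta,x}]^\top\mathbb{E}_x[w_{\theta,x}]\,dt$. The integrability in Assumption~\ref{assumption:expectation_timeaverage}(1) allows Fubini. Since $\int_0^T(v_{\theta,x}-C_v)\,dt=0$, the two cross terms vanish, which yields identity \eqref{eq:eip}:
\[
\int_0^T \mathbb{E}_x[v_{\theta,x}]^\top\mathbb{E}_x[w_{\theta,x}]\,dt=\int_0^T \mathbb{E}_x[v_{\theta,x}-C_v]^\top\mathbb{E}_x[w_{\theta,x}-C_w]\,dt+\tfrac{1}{T}E_1^\top E_2 .
\]
None of this uses Assumption~\ref{assumption:expectation_integrand_inner_product}, so it carries over verbatim.

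\textbf{Step 2 (bound the deviation term).} By Cauchy--Schwarz and Assumption~\ref{assumption:expectation_timeaverage}(2), exactly as in \eqref{eq:delta_theta}, the integrand of the deviation term is at most $\delta_\theta^2-\frac{\epsilon_v}{T^2}\|E_1\|^2$ for every $t$. Assumption~\ref{assumption:expectation_timeaverage} is still in force and still uses the same $\delta_\theta$, so this step is also unchanged.

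\textbf{Step 3 (insert the weak pointwise bound).} Lemma~\ref{lemma:expectation_integrand_weak} gives $\mathbb{E}_x[v_{\theta,x}(t)]^\top\mathbb{E}_x[w_{\theta,x}(t)]\ge \delta_\theta^2-\epsilon_1$ for every $t$. Rearranging the identity from Step 1, multiplying by $T$, and substituting both bounds gives
\[
E_1^\top E_2\ \ge\ T\int_0^T\Big[(\delta_\theta^2-\epsilon_1)-\big(\delta_\theta^2-\tfrac{\epsilon_v}{T^2}\|E_1\|^2\big)\Big]dt=\epsilon_v\|E_1\|^2-T^2\epsilon_1 .
\]
This is the claim with $\epsilon_1'=T^2\epsilon_1$.

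\textbf{Main obstacle.} The only real care needed is in Step 3: checking that the constant $\epsilon_1$ picks up exactly the factor $T^2$ and no other dependence. One factor of $T$ comes from integrating over $[0,T]$, and the other from the $T$ multiplying through when solving for $E_1^\top E_2$.

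A second point is that $\delta_\theta$ may depend on $t$ through $M_\theta$. In that case we should note that the $\delta_\theta^2$ terms cancel pointwise inside the integrand, before integrating. This keeps the cancellation valid without assuming $\delta_\theta$ is constant in time.
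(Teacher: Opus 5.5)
Your proposal is correct and follows the paper's proof essentially verbatim. Like you, the paper reuses the proof of Lemma~\ref{lemma:expectation_integral_product} up to the inequality $E_1^\top E_2 \geq T\int_0^T \bigl(\mathbb{E}_x[v_{\theta,x}(t)]^\top\mathbb{E}_x[w_{\theta,x}(t)] - \delta_\theta^2 + \frac{\epsilon_v}{T^2}\|E_1\|^2\bigr)\,dt$, then inserts the pointwise bound $\delta_\theta^2-\epsilon_1$ from Lemma~\ref{lemma:expectation_integrand_weak}, cancels the $\delta_\theta^2$ terms inside the integrand, and obtains $\epsilon_v\|E_1\|^2 - T^2\epsilon_1$.
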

\begin{proof}
Under these assumptions, everything in the proof of Lemma ~\ref{lemma:expectation_integral_product} up to and including the line
\begin{equation}
E_{1}^{\top}E_2 \geq T\int_{0}^{T}\mathbb{E}_x[v_{\theta,x}(t)]^{\top}\mathbb{E}_x[w_{\theta,x}(t)] -\left( \delta_{\theta}^{2} - \frac{\epsilon_v}{T^2}\|\mathbb{E}_x[\nabla_{\theta}J_x]\|^2 \right)dt 
\end{equation}
still holds. Applying the result of Lemma ~\ref{lemma:expectation_integrand_weak}, we obtain
\begin{align}
E_{1}^{\top}E_2 &\geq T\int_{0}^{T}\delta_{\theta}^2 - \epsilon_1 -\left( \delta_{\theta}^{2} - \frac{\epsilon_v}{T^2}\|\mathbb{E}_x[\nabla_{\theta}J_x]\|^2 \right)dt \\
E_{1}^{\top}E_2 &\geq T\int_{0}^{T}\frac{\epsilon_v}{T^2}\|\mathbb{E}_x[\nabla_{\theta}J_x]\|^2 -\epsilon_1dt \\
E_{1}^{\top}E_2 &\geq \epsilon_v\|\mathbb{E}_x[\nabla_{\theta}J_x]\|^2 - T^2\epsilon_1 = \epsilon_v\|\mathbb{E}_x[\nabla_{\theta}J_x]\|^2 - \epsilon_1'
\end{align}
\end{proof}
\begin{theorem}
\label{theorem:appendix_theorem}
Under the assumptions of Lemma ~\ref{lemma:weak_expectation_integral_product}, the JFB-based SGD iteration ~\eqref{eq:SGD_iteration} with constant step size/learning rate $\alpha > 0$, will converge in (total) expectation to a neighborhood of the critical point
\begin{equation}
 \lim_{K \rightarrow \infty}\mathbb{E} \left[\frac{1}{K} \sum_{j=1}^{K}\|\mathbb{E}_x[\nabla_{\theta}J_x]\|^2\right] \leq \frac{\alpha^2 L_J B_{max}^2T^2}{2 \beta} + \frac{\epsilon_1'}{\epsilon_v}
\end{equation}
\end{theorem}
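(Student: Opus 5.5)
The plan follows the proof of Lemma~\ref{lemma:expectation_descent} and Theorem~\ref{theorem:expectation_cesaro_convergence}, with Lemma~\ref{lemma:expectation_integral_product} replaced by Lemma~\ref{lemma:weak_expectation_integral_product} and the step size held fixed. Write $F(\theta)=\mathbb{E}_x[J_x(\theta)]$.

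\textbf{Per-step descent inequality.} By Assumption~\ref{assumption:T}, $\nabla F$ is $L_J$-Lipschitz, so the descent lemma applied to the update \eqref{eq:SGD_iteration} with $\alpha_j=\alpha$ gives
\begin{equation*}
F(\theta_{j+1})\le F(\theta_j)-\alpha\nabla F(\theta_j)^\top d^{\mathrm{JFB}}_{\xi_j}(\theta_j)+\frac{L_J\alpha^2}{2}\|d^{\mathrm{JFB}}_{\xi_j}(\theta_j)\|^2 .
\end{equation*}
Next I take the conditional expectation $\mathbb{E}_{\xi_j}$. Three facts are used:
\begin{itemize}
\item[] (i) unbiasedness of the minibatch average, $\mathbb{E}_{\xi_j}[d^{\mathrm{JFB}}_{\xi_j}]=\mathbb{E}_x[\dJFB]$;
\item[] (ii) Lemma~\ref{lemma:weak_expectation_integral_product}, which bounds the inner product below by $\epsilon_v\|\nabla F(\theta_j)\|^2-\epsilon_1'$;
\item[] (iii) the second-moment bound \eqref{eq:ub_d_xi_j}, namely $\mathbb{E}_{\xi_j}\|d^{\mathrm{JFB}}_{\xi_j}\|^2\le B_{max}^2T^2/\beta$, which follows from Assumptions~\ref{assumption:h} and \ref{assumption:M} and Cauchy--Schwarz in time.
\end{itemize}
Together these give
\begin{equation*}
\mathbb{E}_{\xi_j}[F(\theta_{j+1})]-F(\theta_j)\le-\alpha\epsilon_v\|\nabla F(\theta_j)\|^2+\alpha\epsilon_1'+\frac{\alpha^2L_JB_{max}^2T^2}{2\beta}.
\end{equation*}

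\textbf{Telescoping.} I take total expectation and sum over $j=1,\dots,K$. Lower boundedness of $J$ by $J_{\inf}$ then gives
\begin{equation*}
\alpha\epsilon_v\sum_{j=1}^K\mathbb{E}\|\nabla F(\theta_j)\|^2\le \mathbb{E}[F(\theta_1)]-J_{\inf}+K\alpha\epsilon_1'+\frac{K\alpha^2L_JB_{max}^2T^2}{2\beta}.
\end{equation*}
Dividing by $K\alpha\epsilon_v$ and letting $K\to\infty$ removes the initial-gap term. By linearity of expectation this leaves
\begin{equation*}
\limsup_{K\to\infty}\mathbb{E}\Big[\frac1K\sum_{j=1}^K\|\nabla F(\theta_j)\|^2\Big]\le \frac{\alpha L_JB_{max}^2T^2}{2\beta\epsilon_v}+\frac{\epsilon_1'}{\epsilon_v}.
\end{equation*}
The statement writes $\lim$; if the limit need not exist, the bound should be read as a $\limsup$.

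\textbf{Main obstacle: matching the stated constant.} The $\epsilon_1'/\epsilon_v$ term comes out exactly as stated. The natural variance term, however, is $\alpha L_JB_{max}^2T^2/(2\beta\epsilon_v)$, whereas the statement has $\alpha^2L_JB_{max}^2T^2/(2\beta)$ with no $\epsilon_v$. I would first reduce this to a comparison of the two constants under a step-size restriction such as $\alpha\le 1/\epsilon_v$ (or an analogue of the cap $2\epsilon_v/(L_J(1-\gamma)^2)$ used earlier). If the stated form cannot be recovered this way, I would present the derived bound, which differs only in that constant and gives the same conclusion: convergence to a neighborhood whose radius shrinks with $\alpha$ plus a bias floor $\epsilon_1'/\epsilon_v$.

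A secondary check is that Lemma~\ref{lemma:weak_expectation_integral_product} still applies under the modified variance assumption. This requires $\delta_\theta$ in Assumption~\ref{assumption:expectation_timeaverage} to be built from the same $\delta_{var}$ as in Assumption~\ref{assumption:app_weak_bound_on_integrand_var}, which I would simply take as given.
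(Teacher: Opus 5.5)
Your argument is the same as the paper's (descent lemma with constant $\alpha$, unbiasedness of the minibatch JFB direction, Lemma~\ref{lemma:weak_expectation_integral_product} for the cross term, the second-moment bound \eqref{eq:ub_d_xi_j}, telescoping against $J_{\inf}$, and division by $K\alpha\epsilon_v$), and the paper's own proof ends at exactly your bound $\frac{\alpha L_J B_{max}^2T^2}{2\beta\epsilon_v}+\frac{\epsilon_1'}{\epsilon_v}$, as its remark about $\alpha\to 0$ after the proof confirms, so the $\alpha^2$ term without $\epsilon_v$ in the statement is a misprint and not something your argument fails to reach. Drop the proposed reconciliation via $\alpha\le 1/\epsilon_v$, though: that restriction gives $\alpha^2\le\alpha/\epsilon_v$, which would make the stated bound stronger than the derived one rather than weaker, so presenting the derived bound (read as a $\limsup$) is the right resolution.
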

\begin{proof}
Under these assumptions, everything in the proof of Lemma ~\ref{lemma:expectation_descent} up to and including the line
\begin{equation}
\mathbb{E}_{\xi_j}[F(\theta_{j+1})] - F(\theta_j) \leq -\alpha_j\nabla_{\theta}F(\theta_j)^{\top}\mathbb{E}_{\xi_j}[d_{\xi_j}^{JFB}(\theta_j)] + \mathbb{E}_{\xi_j}\left[\frac{\alpha_{j}^2L_J}{2}\|\dJFB(\theta_j)\|^2\right]
\end{equation}
still holds. Replacing $F$ with $\mathbb{E}_x[J_x]$, $\mathbb{E}_{\xi_j}[d_{\xi_j}^{JFB}(\theta_j)]$ with $\mathbb{E}_x[\dJFB(\theta_j)]$ like in Lemma ~\ref{lemma:expectation_descent}, and applying the result of Lemma ~\ref{lemma:weak_expectation_integral_product} and the upper bound $\mathbb{E}_{\xi_j}[\|d_{\xi_j}^{JFB}(\theta)\|^2] \leq \frac{B_{max}^2T^2}{\beta}$, we have
\begin{equation}
\mathbb{E}_{\xi_j}[\mathbb{E}_x[J_x(\theta_{j+1})]] -\mathbb{E}_x[J_x(\theta_j)] \leq -\alpha(\epsilon_v\|\mathbb{E}_x[\nabla_{\theta}J_x(\theta_j)]\|^2-\epsilon_1') + \frac{\alpha^2}{2}L_J\left(\frac{B_{max}^2T^2}{\beta}\right)
\end{equation}
Taking total expectation of the above line, we have
\begin{equation}
\mathbb{E}[\mathbb{E}_x[J_x(\theta_{j+1})]] - \mathbb{E}[\mathbb{E}_x[J_x(\theta_j)]] \leq -\alpha(\epsilon_v\mathbb{E}[\|\mathbb{E}_{x}[\nabla_{\theta}J_x(\theta_j)]\|^2] - \epsilon_1') + \frac{\alpha^2}{2}L_{J}\left( \frac{B_{max}^2T^2}{\beta}\right)
\label{eq:descent1}
\end{equation}
Setting $j=0$ in ~\eqref{eq:descent1}, we have
\begin{align}
\mathbb{E}[\mathbb{E}_x[J_x(\theta_{1})]] - \mathbb{E}[\mathbb{E}_x[J_x(\theta_0)]] &\leq -\alpha(\epsilon_v\mathbb{E}[\|\mathbb{E}_{x}[\nabla_{\theta}J_x(\theta_0)]\|^2] - \epsilon_1') + \frac{\alpha^2}{2}L_{J}\left( \frac{B_{max}^2T^2}{\beta}\right) \\
\mathbb{E}[\mathbb{E}_x[J_x(\theta_{1})]] &\leq \mathbb{E}[\mathbb{E}_x[J_x(\theta_0)]] -\alpha(\epsilon_v\mathbb{E}[\|\mathbb{E}_{x}[\nabla_{\theta}J_x(\theta_0)]\|^2] - \epsilon_1') + \frac{\alpha^2}{2}L_{J}\left( \frac{B_{max}^2T^2}{\beta}\right)
\label{eq:descent2}
\end{align}
Setting $j=1$ in ~\eqref{eq:descent1} and applying ~\eqref{eq:descent2} yields
\begin{align}
\mathbb{E}[\mathbb{E}_x[J_x(\theta_{2})]] - \mathbb{E}[\mathbb{E}_x[J_x(\theta_1)]] &\leq -\alpha(\epsilon_v\mathbb{E}[\|\mathbb{E}_{x}[\nabla_{\theta}J_x(\theta_1)]\|^2] - \epsilon_1') + \frac{\alpha^2}{2}L_{J}\left( \frac{B_{max}^2T^2}{\beta}\right) \\
\label{eq:descent3}
\mathbb{E}[\mathbb{E}_x[J_x(\theta_{2})]] &\leq \mathbb{E}[\mathbb{E}_x[J_x(\theta_1)]] -\alpha(\epsilon_v\mathbb{E}[\|\mathbb{E}_{x}[\nabla_{\theta}J_x(\theta_1)]\|^2] - \epsilon_1') + \frac{\alpha^2}{2}L_{J}\left( \frac{B_{max}^2T^2}{\beta}\right) \\
\mathbb{E}[\mathbb{E}_x[J_x(\theta_{2})]] &\leq \mathbb{E}[\mathbb{E}_x[J_x(\theta_0)]] -\alpha(\epsilon_v\mathbb{E}[\|\mathbb{E}_{x}[\nabla_{\theta}J_x(\theta_0)]\|^2] - \epsilon_1') + \frac{\alpha^2}{2}L_{J}\left( \frac{B_{max}^2T^2}{\beta}\right) - \\ &\alpha(\epsilon_v\mathbb{E}[\|\mathbb{E}_{x}[\nabla_{\theta}J_x(\theta_1)]\|^2] - \epsilon_1') + \frac{\alpha^2}{2}L_{J}\left( \frac{B_{max}^2T^2}{\beta}\right) \\
\mathbb{E}[\mathbb{E}_x[J_x(\theta_{2})]] - \mathbb{E}[\mathbb{E}_x[J_x(\theta_0)]] &\leq -\alpha\sum_{j=0}^{1}(\epsilon_v\mathbb{E}[\|\mathbb{E}_{x}[\nabla_{\theta}J_x(\theta_j)]\|^2] - \epsilon_1') + (2)\frac{\alpha^2}{2}L_{J}\left( \frac{B_{max}^2T^2}{\beta}\right) \\
& \vdots \\
\mathbb{E}[\mathbb{E}_x[J_x(\theta_{K})]] - \mathbb{E}[\mathbb{E}_x[J_x(\theta_0)]] &\leq -\alpha\sum_{j=0}^{K-1}(\epsilon_v\mathbb{E}[\|\mathbb{E}_{x}[\nabla_{\theta}J_x(\theta_j)]\|^2] - \epsilon_1') + K\left(\frac{\alpha^2 L_J B_{max}^2 T}{2 \beta}\right)
\label{eq:descent4}
\end{align}
Because $J_x(\theta)$ is bounded from below by $J_{inf}$ (restrict domain if necessary), \eqref{eq:descent4} becomes, using linearity of expectation,
\begin{align*}
\alpha \epsilon_v \mathbb{E}\left[\sum_{j=0}^{K-1}\|\mathbb{E}_x[\nabla_{\theta}J_x]\|^2\right] &\leq \mathbb{E}[\mathbb{E}_x[\nabla_{\theta}J_x(\theta_0)]] - J_{inf} + \alpha K \left( \frac{\alpha B_{max}^2 T L_J}{2 \beta} + \epsilon_1'\right) \\
\mathbb{E}\left[\frac{1}{K}\sum_{j=0}^{K-1}\|\mathbb{E}_x[\nabla_{\theta}J_x]\|^2\right] &\leq \frac{\mathbb{E}[\mathbb{E}_x[\nabla_{\theta}J_x(\theta_0)]] - J_{inf}}{K\alpha \epsilon_v} + \left( \frac{\alpha B_{max}^2 T L_J}{2 \beta \epsilon_v} + \frac{\epsilon_1'}{\epsilon_v}\right)
\end{align*}
Thus, 
\begin{align}
\lim_{K \rightarrow \infty} \mathbb{E}\left[ \frac{1}{K}\sum_{j=0}^{K}\|\mathbb{E}_x[\nabla_{\theta}J_x(\theta_j)]\|^2 \right]
& \leq \lim_{K \rightarrow \infty} \left[\frac{\mathbb{E}[\mathbb{E}_x[\nabla_{\theta}J_x(\theta_0)]] - J_{inf}}{K\alpha \epsilon_v} + \frac{\alpha B_{max}^2 T L_J}{2 \beta \epsilon_v} + \frac{\epsilon_1'}{\epsilon_v} \right] \\
&= \frac{\alpha L_J B_{max}^2 T^2}{2 \beta \epsilon_v} + \frac{\epsilon_1'}{\epsilon_v}
\end{align}
\end{proof}
It can be seen from the result of Theorem ~\ref{theorem:appendix_theorem} that as $\alpha \rightarrow 0$, $\frac{\alpha L_J B_{max}^2T^2}{2 \beta \epsilon_v} \rightarrow 0$, leaving only $\frac{\epsilon_1'}{\epsilon_v}$. Thus, even if a decreasing step size is used, like in Theorem \ref{theorem:expectation_cesaro_convergence}, it is impossible to achieve
\begin{equation}
    \liminf_{j \rightarrow \infty}\mathbb{E}\left[ \|\mathbb{E}_x[\nabla_{\theta}J_x(\theta_j)]\|^2\right] = 0
\end{equation}
under Assumption ~\ref{assumption:app_weak_bound_on_integrand_var}. The JFB-based SGD iteration ~\eqref{eq:SGD_iteration} will only be able to, at best, in expectation, get to a neighborhood of radius $\frac{\epsilon_1'}{\epsilon_v}$ of a critical point of $J$.

\section{Convergence to Critical Point Under Slightly Stronger Version of Assumption ~\ref{assumption:app_weak_bound_on_integrand_var}}
With a slight modification to Assumption ~\ref{assumption:app_weak_bound_on_integrand_var}, it is possible to prove the convergence of the iteration ~\eqref{eq:SGD_iteration} to a critical point, not just to a neighborhood. This assumption is marginally stronger than Assumption ~\ref{assumption:app_weak_bound_on_integrand_var} but is significantly weaker than Assumption ~\ref{assumption:expectation_integrand_inner_product}.
\begin{assumption}
    \label{assumption:limiting_bound_on_integrand_var}
    Modify Assumption ~\ref{assumption:expectation_integrand_inner_product} to be $\exists 0 < \delta_{var} < \lambda_{-} - \gamma \lambda_{+}$ and a sequence of $\{\theta_j\}_{j=0}^{\infty}$ such that, for a nonnegative sequence $\{\epsilon_j\}_{j=0}^{\infty}$ that satisfies $\sum_{j=0}^{\infty}\epsilon_j < \infty$, and $\forall z, u, j$
    \begin{equation*}
    \text{max}(\text{Var}_x[v_{\theta_j,x}(t)],\text{Var}_x[w_{\theta_j,x}(t)])^2 \leq \epsilon_j + \delta_{var}\|\mathbb{E}_x[M_{\theta_j}v_{\theta_j,x}]\|^2
    \end{equation*}
\end{assumption}
\begin{lemma}
Under the Assumptions of ~\ref{assumption:T}-~\ref{assumption:M} and Assumption ~\ref{assumption:limiting_bound_on_integrand_var}, $\forall z, u,j$
\begin{equation}
\langle \mathbb{E}_x[v_{\theta_j,x}(t)],\mathbb{E}_x[w_{\theta_j,x}(t)] \rangle \geq \delta_{\theta_j}^2 - \epsilon_j
\end{equation}
where $\delta_{\theta}$ is defined in Assumption ~\ref{assumption:expectation_timeaverage} and $\epsilon_j$ is defined in Assumption ~\ref{assumption:limiting_bound_on_integrand_var}
\label{lemma:expectation_integrand_limiting}
\end{lemma}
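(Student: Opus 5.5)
The plan is to mirror the proof of Lemma~\ref{lemma:expectation_integrand_weak} (itself a modification of Lemma~\ref{lemma:expectation_integrand}), applying it at each fixed iterate $\theta=\theta_j$ and replacing the constant slack $\epsilon_1$ with the iterate-dependent slack $\epsilon_j$. Nothing in Steps 1--3 of the proof of Lemma~\ref{lemma:expectation_integrand} uses the variance assumption. Those steps only need Assumptions~\ref{assumption:T}--\ref{assumption:M}: the reformulation via $\psi=M_\theta v_{\theta,x}$, the coercivity bound $\inner{v_{\theta,x}}{w_{\theta,x}}\geq(\lambda_--\gamma\lambda_+)\norm{M_\theta v_{\theta,x}}^2$, and Jensen's inequality. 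So for every $j$ (and every $t,z,u$) I will first record
\begin{equation*}
\mathbb{E}_x[\inner{v_{\theta_j,x}}{w_{\theta_j,x}}]\geq(\lambda_--\gamma\lambda_+)\norm{\mathbb{E}_x[M_{\theta_j}v_{\theta_j,x}]}^2 .
\end{equation*}

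Next I will reuse the centering decomposition of Step 4 with $E_v=\mathbb{E}_x[v_{\theta_j,x}(t)]$ and $E_w=\mathbb{E}_x[w_{\theta_j,x}(t)]$. The cross terms vanish, and Cauchy--Schwarz gives
\begin{equation*}
\mathbb{E}_x[\inner{v_{\theta_j,x}}{w_{\theta_j,x}}]\leq \max\left(\sqrt{\text{Var}_x[v_{\theta_j,x}]},\sqrt{\text{Var}_x[w_{\theta_j,x}]}\right)^2+\inner{E_v}{E_w}.
\end{equation*}
I will then invoke Assumption~\ref{assumption:limiting_bound_on_integrand_var} at index $j$ to bound the variance term by $\epsilon_j+\delta_{var}\norm{\mathbb{E}_x[M_{\theta_j}v_{\theta_j,x}]}^2$. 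Rearranging and combining with the first display gives $\inner{E_v}{E_w}\geq(\lambda_--\gamma\lambda_+-\delta_{var})\norm{\mathbb{E}_x[M_{\theta_j}v_{\theta_j,x}]}^2-\epsilon_j$. By the definition of $\delta_\theta$ in Assumption~\ref{assumption:expectation_timeaverage}, evaluated at $\theta=\theta_j$, this equals $\delta_{\theta_j}^2-\epsilon_j$.

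The one real subtlety is matching the form of the variance quantity in Assumption~\ref{assumption:limiting_bound_on_integrand_var}, which is written as $\max(\text{Var}_x[\cdot],\text{Var}_x[\cdot])^2$ rather than $\max(\sqrt{\text{Var}_x[\cdot]},\cdot)^2$. I will read it in the same sense as Assumption~\ref{assumption:expectation_integrand_inner_product}, namely as a bound on $\max(\text{Var}_x[v],\text{Var}_x[w])$, as was done implicitly in Lemma~\ref{lemma:expectation_integrand_weak}. Otherwise I would need an extra comparison between $\text{Var}$ and $\text{Var}^2$ that is not available.

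Beyond that, I only need to check that $\delta_{var}<\lambda_--\gamma\lambda_+$, so that the square root defining $\delta_{\theta_j}$ is real. I also need the bounds $\lambda_\pm$ to be uniform, so that the same constants serve every $j$. Both are built into the assumptions, so the argument goes through pointwise in $j$.
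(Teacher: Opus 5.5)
Your proposal is correct and matches the paper's approach: the paper omits this proof as essentially identical to that of Lemma~\ref{lemma:expectation_integrand_weak}, which reruns Steps 1--4 of Lemma~\ref{lemma:expectation_integrand} at $\theta=\theta_j$ with the slack $\epsilon_j$ in place of $\epsilon_1$. Your reading of $\max(\text{Var}_x[\cdot],\text{Var}_x[\cdot])^2$ in Assumption~\ref{assumption:limiting_bound_on_integrand_var} as $\max(\sqrt{\text{Var}_x[\cdot]},\sqrt{\text{Var}_x[\cdot]})^2$ is how the paper implicitly uses the analogous assumption in Lemma~\ref{lemma:expectation_integrand_weak}, and you correctly keep $M_{\theta_j}$ inside the norm where the paper's written proof of that lemma drops it.
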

The proof of this lemma is essentially identical to that of Lemma ~\ref{lemma:expectation_integrand_weak} and is thus omitted. \newline
To prove convergence under Assumption ~\ref{assumption:limiting_bound_on_integrand_var}, we will need the following lemma, whose proof is omitted because it is essentially identical to that of Lemma ~\ref{lemma:weak_expectation_integral_product}.
\begin{lemma}
Under Assumptions ~\ref{assumption:T}-~\ref{assumption:M}, ~\ref{assumption:limiting_bound_on_integrand_var}, and ~\ref{assumption:expectation_timeaverage}, $\forall u,z,j$
\begin{equation*}
\mathbb{E}_x[\nabla_{\theta_j}J_x]^{\top}\mathbb{E}_x[\dJFB] \geq \epsilon_v\|\mathbb{E}_x[\nabla_{\theta_j}J_x]\|^2 - \epsilon_j'
\end{equation*}
where $\epsilon_v$ is given in Assumption ~\ref{assumption:expectation_timeaverage} and $\epsilon_j' = T^2\epsilon_j$ with $\epsilon_j$ is from Assumption ~\ref{assumption:limiting_bound_on_integrand_var}
\label{lemma:limiting_expectation_integral_product}
\end{lemma}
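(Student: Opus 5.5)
The plan is to reuse the proof of Lemma~\ref{lemma:weak_expectation_integral_product} (itself a modification of Lemma~\ref{lemma:expectation_integral_product}) line by line, with the constant $\epsilon_1$ replaced by the iterate-dependent $\epsilon_j$ and $\theta$ fixed to $\theta_j$. Fix $j$ and set $E_1 = \mathbb{E}_x[\nabla_{\theta}J_x(\theta_j)]$ and $E_2 = \mathbb{E}_x[\dJFB(\theta_j)]$. The first step reproduces the time-average decomposition. Write $v_{\theta_j,x}(t) = (v_{\theta_j,x}(t)-C_v)+C_v$, and similarly for $w$. The integrability part of Assumption~\ref{assumption:expectation_timeaverage} lets us apply Fubini, and together with $\int_0^T (v_{\theta_j,x}-C_v)\,dt = 0$ this kills the cross terms. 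We arrive at the identity \eqref{eq:eip} at $\theta=\theta_j$:
\begin{equation*}
\int_0^T \mathbb{E}_x[v_{\theta_j,x}(t)]^\top \mathbb{E}_x[w_{\theta_j,x}(t)]\,dt = \int_0^T \mathbb{E}_x[v_{\theta_j,x}-C_v]^\top\mathbb{E}_x[w_{\theta_j,x}-C_w]\,dt + \tfrac{1}{T}E_1^\top E_2 .
\end{equation*}
None of this uses the variance assumption, so it holds verbatim.

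Next, bound the deviation term pointwise in $t$. Apply Cauchy--Schwarz and the second part of Assumption~\ref{assumption:expectation_timeaverage}, evaluated at $\theta=\theta_j$, exactly as in \eqref{eq:delta_theta}. This gives an upper bound of $\delta_{\theta_j}^2 - \frac{\epsilon_v}{T^2}\|E_1\|^2$. Rearranging yields
\begin{equation*}
E_1^\top E_2 \geq T\int_0^T \Big(\mathbb{E}_x[v_{\theta_j,x}(t)]^\top\mathbb{E}_x[w_{\theta_j,x}(t)] - \delta_{\theta_j}^2 + \tfrac{\epsilon_v}{T^2}\|E_1\|^2\Big)dt .
\end{equation*}

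Now insert the pointwise-in-time alignment bound of Lemma~\ref{lemma:expectation_integrand_limiting}, namely $\langle \mathbb{E}_x[v_{\theta_j,x}(t)],\mathbb{E}_x[w_{\theta_j,x}(t)]\rangle \geq \delta_{\theta_j}^2-\epsilon_j$ for all $t$. The $\delta_{\theta_j}^2$ terms cancel, leaving
\begin{equation*}
E_1^\top E_2 \ge T\int_0^T\big(\tfrac{\epsilon_v}{T^2}\|E_1\|^2-\epsilon_j\big)\,dt = \epsilon_v\|E_1\|^2 - T^2\epsilon_j .
\end{equation*}
This is the claim with $\epsilon_j' = T^2\epsilon_j$.

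The only delicate point is bookkeeping rather than analysis. Assumption~\ref{assumption:limiting_bound_on_integrand_var} holds only along the particular sequence $\{\theta_j\}$, so every quantity must be evaluated at $\theta_j$: $\delta_{\theta_j}$, $C_v$, $C_w$, and the bound from Assumption~\ref{assumption:expectation_timeaverage}. Because $\epsilon_j$ does not depend on $t$, it integrates to $T\epsilon_j$, and the outer factor $T$ then produces $T^2\epsilon_j$. I expect no genuine obstacle beyond checking that Assumption~\ref{assumption:expectation_timeaverage} is used with the same $\delta_{\theta_j}$ that appears in Lemma~\ref{lemma:expectation_integrand_limiting}, so that the cancellation is exact.
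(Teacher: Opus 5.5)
Your proposal is correct and follows the paper's route: the paper omits this proof as essentially identical to that of Lemma~\ref{lemma:weak_expectation_integral_product}. That proof reuses the time-average decomposition \eqref{eq:eip} and the bound \eqref{eq:delta_theta} at $\theta=\theta_j$, then substitutes Lemma~\ref{lemma:expectation_integrand_limiting}, so the $\delta_{\theta_j}^2$ terms cancel and $\epsilon_j$ integrates to $T^2\epsilon_j$. Your bookkeeping check, that everything is evaluated at $\theta_j$ with the same (pointwise-in-$t$) $\delta_{\theta_j}$ in both bounds, is exactly the point that needs verifying.
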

We can now prove convergence to the critical point under Assumption ~\ref{assumption:limiting_bound_on_integrand_var}. 

\begin{theorem}
\label{theorem:limiting_theorem}
Suppose the hypotheses of Lemma ~\ref{lemma:limiting_expectation_integral_product} are true with the sequence $\{\epsilon_j\}_{j=0}^{\infty}$ from Assumption ~\ref{assumption:limiting_bound_on_integrand_var} satisfying $\epsilon_0 < \frac{\epsilon_vB_{max}^2}{\beta(1-\gamma)^2} $. Then, the JFB-based SGD iteration ~\eqref{eq:SGD_iteration} with decreasing step size/learning rate  $\alpha_j > 0, \sum_{j=0}^{\infty}\alpha_j = \infty, \sum_{j=0}^{\infty}\alpha_j^2 < \infty$, and $\alpha_j \leq \frac{2 \epsilon_v}{L_J(1-\gamma)^2} - \frac{2\epsilon_j' \beta}{L_JB_{max}^2T^2} \forall j$ will converge in (total) expectation to a critical point. In other words,
\begin{equation}
 \lim_{K \rightarrow \infty}\mathbb{E} \left[\frac{1}{K} \sum_{j=1}^{K}\|\mathbb{E}_x[\nabla_{\theta}J_x]\|^2\right] = 0
\end{equation}
\end{theorem}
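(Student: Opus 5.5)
The plan is to copy the telescoping argument from the proof of Theorem~\ref{theorem:expectation_cesaro_convergence}, replacing Lemma~\ref{lemma:expectation_integral_product} by Lemma~\ref{lemma:limiting_expectation_integral_product}. The additive error $\epsilon_j'$ it produces is summable by Assumption~\ref{assumption:limiting_bound_on_integrand_var}, so it only adds a finite constant. Write $F(\theta)=\mathbb{E}_x[J_x(\theta)]$ and $g_j=\mathbb{E}\big[\|\nabla_\theta F(\theta_j)\|^2\big]$.

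\textbf{Step 1: one-step descent inequality.} I would start, as in the proof of Lemma~\ref{lemma:expectation_descent}, from the $L_J$-smoothness inequality for $F$ at $\theta_j$. Taking $\mathbb{E}_{\xi_j}$ and using unbiasedness of the minibatch JFB direction, $\mathbb{E}_{\xi_j}[d^{\mathrm{JFB}}_{\xi_j}(\theta_j)]=\mathbb{E}_x[\dJFB(\theta_j)]$, gives the inner-product term. I then bound that term by Lemma~\ref{lemma:limiting_expectation_integral_product} and the second moment by \eqref{eq:ub_d_xi_j}, which yields
\begin{equation*}
\mathbb{E}_{\xi_j}[F(\theta_{j+1})]-F(\theta_j)\le -\alpha_j\epsilon_v\|\nabla_\theta F(\theta_j)\|^2+\alpha_j\epsilon_j'+\frac{\alpha_j^2L_JB_{max}^2T^2}{2\beta}.
\end{equation*}
The cap $\alpha_j\le \frac{2\epsilon_v}{L_J(1-\gamma)^2}-\frac{2\epsilon_j'\beta}{L_JB_{max}^2T^2}$ is used exactly as at the end of Lemma~\ref{lemma:expectation_descent}. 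Combined with the uniform bound $\|\nabla_\theta F\|^2\le B_{max}^2T^2/(\beta(1-\gamma)^2)$, it keeps the right-hand side consistent with descent. The hypothesis $\epsilon_0<\frac{\epsilon_vB_{max}^2}{\beta(1-\gamma)^2}$ guarantees that this admissible step-size interval is nonempty.

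\textbf{Step 2: telescoping.} Taking total expectation, summing over $j=0,\dots,K-1$, and using $F\ge J_{\inf}$ gives
\begin{equation*}
\epsilon_v\sum_{j=0}^{K-1}\alpha_j g_j\le F(\theta_0)-J_{\inf}+\alpha_0T^2\sum_{j}\epsilon_j+\frac{L_JB_{max}^2T^2}{2\beta}\sum_j\alpha_j^2=:C<\infty .
\end{equation*}
Here I use $\alpha_j\le\alpha_0$ (the step sizes are decreasing), $\sum\epsilon_j<\infty$, and $\sum\alpha_j^2<\infty$. This yields $\sum_{j\ge0}\alpha_jg_j<\infty$.

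\textbf{Step 3: passing to the unweighted average.} Since $\alpha_j$ is decreasing, $\alpha_j\ge\alpha_K$ for $j\le K$, so
\begin{equation*}
\frac1K\sum_{j=1}^K g_j\le \frac{1}{K\alpha_K}\sum_{j=1}^K\alpha_jg_j\le\frac{C}{\epsilon_vK\alpha_K}.
\end{equation*}
This is where I expect the main obstacle. The bound tends to zero only if $K\alpha_K\to\infty$, and that does not follow from $\sum\alpha_j=\infty$ together with $\sum\alpha_j^2<\infty$; for example, $\alpha_j=1/(j\log j)$ is decreasing and satisfies both, yet $K\alpha_K\to0$. The first thing I would try is to sharpen the tail. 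Kronecker's lemma applied to $\sum\alpha_jg_j<\infty$ with $b_j=1/\alpha_j\uparrow\infty$ gives $\alpha_K\sum_{j\le K}g_j\to0$, and splitting the sum at a moving index $N$ gives $\frac1K\sum_{j>N}g_j\le\frac{1}{K\alpha_K}\sum_{j>N}\alpha_jg_j$ with a vanishing tail factor. Both arguments still need $\liminf_K K\alpha_K>0$.

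So for typical schedules such as $\alpha_j=c/(j+1)^{p}$ with $p\in(1/2,1]$, where $K\alpha_K$ is bounded below or tends to infinity, the argument closes and gives the stated limit $0$. Without such a condition, my fallback is the $\alpha_j$-weighted Ces\`aro statement (dividing by $A_K$, as in Theorem~\ref{theorem:expectation_cesaro_convergence}) together with the $\liminf$ conclusion of Theorem~\ref{theorem:expectation_liminf}. I would explicitly record the growth condition $K\alpha_K\not\to0$ as the extra hypothesis under which the literal unweighted $\frac1K\sum$ form holds.
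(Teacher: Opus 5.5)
Your Steps 1--2 are the paper's proof. It uses the same one-step inequality from Lemma~\ref{lemma:limiting_expectation_integral_product} and \eqref{eq:ub_d_xi_j}, then telescopes against $J_{\inf}$. Your bound $\sum_j\alpha_j\epsilon_j'\le\alpha_0T^2\sum_j\epsilon_j$ is a cleaner substitute for the paper's Dirichlet-test remark, and neither the step-size cap nor the condition on $\epsilon_0$ is needed for it.

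The paper then divides by $A_K$ and stops at $\frac{1}{A_K}\sum_j\alpha_jg_j\to0$. It never passes to the unweighted $\frac1K\sum_jg_j$ in the statement, so your fallback is exactly what the paper establishes. You are also right that this passage is not automatic for an arbitrary nonnegative sequence.

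The gap is in your Step 3. You leave the stated limit unproved and propose adding $\liminf_K K\alpha_K>0$ as a hypothesis, but that hypothesis is not needed. Your $1/(j\log j)$ obstruction requires $g_j$ to switch between order-one and near-zero values in a single step while $\alpha_j\to0$. The $g_j$ produced by this iteration cannot do that.

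The missing idea is that $g_j$ changes by at most a constant multiple of $\alpha_j$ per step. Under the paper's assumptions three bounds hold:
\begin{itemize}
\item $\|\nabla_\theta F\|\le G:=B_{max}T/(\sqrt{\beta}(1-\gamma))$, the bound you already use.
\item Every JFB direction, single-sample or minibatch, satisfies $\|d^{\mathrm{JFB}}_{\xi_j}(\theta_j)\|\le D:=B_{max}T/\sqrt{\beta}$ deterministically.
\item $\nabla_\theta F$ is $L_J$-Lipschitz.
\end{itemize}
Together these give
\begin{equation*}
|g_{j+1}-g_j|\le\mathbb{E}\big[\|\nabla_\theta F(\theta_{j+1})-\nabla_\theta F(\theta_j)\|\,\big(\|\nabla_\theta F(\theta_{j+1})\|+\|\nabla_\theta F(\theta_j)\|\big)\big]\le 2GL_JD\,\alpha_j=:C\alpha_j .
\end{equation*}

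Now $\sum_j\alpha_jg_j<\infty$ and $\sum_j\alpha_j=\infty$ give $\liminf_jg_j=0$. Suppose $\limsup_jg_j>\eta>0$. Then there are infinitely many disjoint blocks $m_k<j\le n_k$ with $g_{m_k}<\eta/2$, $g_{n_k}>\eta$, and $g_j\ge\eta/2$ on the block. On each block, $\eta/2<g_{n_k}-g_{m_k}\le C\sum_{j=m_k}^{n_k-1}\alpha_j$. Since $\alpha_j\to0$ (from $\sum_j\alpha_j^2<\infty$), this gives $\sum_{j=m_k+1}^{n_k}\alpha_j\ge\eta/(4C)$ for large $k$. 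So each block contributes at least $\eta^2/(8C)$ to $\sum_j\alpha_jg_j$, which contradicts its finiteness.

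Hence $g_j\to0$; this is the standard step that upgrades a $\liminf$ to a limit in nonconvex SGD analyses (cf.\ \cite{bottou2018optimization}). It follows that $\frac1K\sum_{j=1}^Kg_j\to0$ for every admissible schedule, including $\alpha_j=1/(j\log j)$. With this step your proof of the literal statement closes without any growth condition on $K\alpha_K$. The same argument also turns the $\liminf$ of Theorem~\ref{theorem:expectation_liminf} into a limit.
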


\begin{proof}
To prove this theorem, an approach identical to the beginning of the proof of Theorem ~\ref{theorem:appendix_theorem} yields the expression below $\forall j$
\begin{align}
\mathbb{E}[\mathbb{E}_x[J_x(\theta_{j+1})]] - \mathbb{E}[\mathbb{E}_x[J_x(\theta_j)]] &\leq -\alpha_j(\epsilon_v\mathbb{E}[\|\mathbb{E}_{x}[\nabla_{\theta}J_x(\theta_j)]\|^2] - \epsilon_j') + \frac{\alpha_j^2}{2}L_{J}\left( \frac{B_{max}^2T^2}{\beta}\right) \\
\mathbb{E}[\mathbb{E}_x[J_x(\theta_{j+1})]] - \mathbb{E}[\mathbb{E}_x[J_x(\theta_j)]] &\leq -\alpha_j\epsilon_v\mathbb{E}[\|\mathbb{E}_{x}[\nabla_{\theta}J_x(\theta_j)]\|^2] + \alpha_j\epsilon_j' + \frac{\alpha_j^2}{2}L_{J}\left( \frac{B_{max}^2T^2}{\beta}\right)
\label{eq:limiting_descent1}
\end{align}
For the RHS of \eqref{eq:limiting_descent1} to be $\leq 0$, it is necessary that
\begin{align}
    -\alpha_j\epsilon_v\|\mathbb{E}_x[\nabla_{\theta}J_x(\theta_j)]\|^2 + \alpha_j\epsilon_j' + \frac{\alpha_j^2L_JB_{max}^2T^2}{2\beta} &\leq 0 \\
    \frac{\alpha_j^2L_JB_{max}^2T^2}{2\beta} &\leq \alpha_j\epsilon_v\|\mathbb{E}_x[\nabla_{\theta}J_x(\theta_j)]\|^2 - \alpha_j\epsilon_j' \\
    \frac{\alpha_jL_JB_{max}^2T^2}{2\beta} &\leq \epsilon_v\|\mathbb{E}_x[\nabla_{\theta}J_x(\theta_j)]\|^2 - \epsilon_j' \\
    \frac{\alpha_jL_JB_{max}^2T^2}{2\beta} &\leq \epsilon_v\left(\frac{B_{max}^2T^2}{\beta(1-\gamma)^2}\right) - \epsilon_j' \\
    \alpha_j &\leq \frac{2}{L_J}\left(\frac{\epsilon_v}{(1-\gamma)^2} - \frac{\epsilon_j'\beta}{B_{max}^2T^2}\right)
    \label{eq:limiting_step_size}
\end{align}
as was assumed. For the RHS of \eqref{eq:limiting_step_size} to be $> 0$, it is necessary that
\begin{align}
    0 &< \frac{2 \epsilon_v}{L_J(1-\gamma)^2} - \frac{2\epsilon_j'\beta}{L_jB_{max}^2T^2} \\
    \epsilon_j' &< \frac{\epsilon_vB_{max}^2T^2}{\beta(1-\gamma)^2} \\
    \epsilon_j &< \frac{\epsilon_vB_{max}^2}{\beta(1-\gamma)^2}
\end{align}
Since $\epsilon_j' = T^2\epsilon_j$. Thus, descent toward the critical point has been proved for each step $j$. 
Then, using an argument identical to that of \eqref{eq:descent3}-\eqref{eq:descent4}, it follows that, $\forall j$,
\begin{equation}
\mathbb{E}[\mathbb{E}_x[J_x(\theta_{K})]] - \mathbb{E}[\mathbb{E}_x[J_x(\theta_0)]] \leq -\epsilon_v\sum_{j=0}^{K-1}\alpha_j\mathbb{E}[\|\mathbb{E}_{x}[\nabla_{\theta}J_x(\theta_j)]\|^2] +\sum_{j=0}^{K-1}\alpha_j\epsilon_j' + \left(\frac{L_J B_{max}^2 T}{2 \beta}\right)\sum_{j=0}^{K-1}\alpha_j^2
\end{equation}
Because $J_x$ is bounded from below by $J_{inf}$ by Assumption ~\ref{assumption:T}, algebraically rearranging the last line above yields, with $A_K = \sum_{j=0}^{K-1}\alpha_j$,
\begin{align}
&\sum_{j=0}^{K-1}\alpha_j\mathbb{E}[\|\mathbb{E}_{x}[\nabla_{\theta}J_x(\theta_j)]\|^2] \leq 
\frac{\mathbb{E}[\mathbb{E}_x[J_x(\theta_0)]] - J_{inf}}{\epsilon_v} + \frac{1}{\epsilon_v}\sum_{j=0}^{K-1}\alpha_j\epsilon_j' + \frac{L_JB_{max}^2T^2}{2\beta\epsilon_v}\sum_{j=0}^{K-1}\alpha_{j}^2\\
&\frac{1}{A_K}\sum_{j=0}^{K-1}\alpha_j\mathbb{E}[\|\mathbb{E}_{x}[\nabla_{\theta}J_x(\theta_j)]\|^2] \leq \frac{\mathbb{E}[\mathbb{E}_x[J_x(\theta_0)]] - J_{inf}}{\epsilon_vA_K} + \frac{1}{\epsilon_vA_K}\sum_{j=0}^{K-1}\alpha_j\epsilon_j' + \frac{L_JB_{max}^2T}{2\beta\epsilon_vA_K}\sum_{j=0}^{K-1}\alpha_{j}^2
\label{eq:limit3}
\end{align}
Because $\sum_{j=0}^{\infty}\epsilon_j < \infty$ it follows that the partial sums of $\{\epsilon_j'\}_{j=0}^{\infty}$ are bounded. Because $\{\alpha_j\}_{j=0}^{\infty}$ is a decreasing sequence, it follows  by the Dirichlet Test that the product series $\sum_{j=0}^{\infty}\alpha_j\epsilon_1'$ converges. Thus, using linearity of expectation, ~\eqref{eq:limit3} becomes
\begin{equation}
\mathbb{E}\left[\frac{1}{A_K}\sum_{j=0}^{K-1}\alpha_j\|\mathbb{E}_{x}[\nabla_{\theta}J_x(\theta_j)]\|^2\right] \leq \frac{\mathbb{E}[\mathbb{E}_x[J_x(\theta_0)]] - J_{inf}}{\epsilon_vA_K} + \frac{1}{\epsilon_vA_K}\sum_{j=0}^{K-1}\alpha_j\epsilon_j'+ \frac{L_JB_{max}^2T^2}{2\beta\epsilon_vA_K}\sum_{j=0}^{K-1}\alpha_{j}^2
\label{eq:limit4}
\end{equation} 
Hence, since $\lim_{K \rightarrow \infty} A_K = \infty$, we have
\begin{align*}
&\lim_{K \rightarrow \infty}\mathbb{E}\left[\frac{1}{A_K}\sum_{j=0}^{K}\alpha_j\|\mathbb{E}_x[\nabla_{\theta}J_x(\theta_j)]\|^2\right] \\
&\leq \lim_{K \rightarrow \infty}\left[\frac{2\beta(\mathbb{E}[\mathbb{E}_x[J_x(\theta_0)]]-J_{inf}) + 2\beta\sum_{j=0}^{K-1}\alpha_j\epsilon_j'+ L_{J}B_{max}^2T^2{\sum_{j=0}^{K-1}\alpha_{j}^2}}{2 \beta \epsilon_vA_K}\right] \\
&= 0
\end{align*}
\end{proof}
Under the assumptions of Theorem ~\ref{theorem:limiting_theorem}, it is possible to prove Theorem ~\ref{theorem:expectation_liminf} and Corollary ~\ref{corollary:convergence_probability}. These proofs are omitted because they are identical to those already shown.

\end{document}